\newcommand{\OO}{\mathcal{O}}
\DeclareMathOperator{\dd}{d}
\DeclareMathOperator{\e}{e}
\newcommand{\Laplace}{\Delta}
\providecommand{\norm}[1]{\left\lVert#1\right\rVert}
\theoremstyle{plain}
\newtheorem{problem}{Problem}[section]
\newtheorem{theorem}[problem]{Theorem}
\newtheorem{corollary}[problem]{Corollary}
\newtheorem{assumption}[problem]{Assumption}
\newtheorem{remark}[problem]{Remark}
\newcommand{\optcrtl}{optimal control}
\title{Implicit Runge-Kutta schemes for optimal control problems with evolution equations}
\author{Thomas G. Flaig\thanks{thomas.flaig@unibw.de, Universit\"at der Bundeswehr M\"unchen, Institut f\"ur Mathematik und Bauinformatik, D-85577 Neubiberg, Germany}
} 
\date{October 28, 2013}
\date{}
\begin{document}

\maketitle

\begin{abstract}
	In this paper we discuss the use of implicit Runge-Kutta schemes for the time discretization 
	of optimal control problems with evolution equations.
	The specialty of the considered discretizations is  that 
	the discretizations schemes for the state and adjoint state are chosen 
	such that discretization and optimization commute. 
	It is well known that for Runge-Kutta schemes with this property additional order conditions are necessary.
	We give sufficient conditions for which class of schemes these additional order condition are automatically fulfilled.
	The focus is especially on implicit Runge-Kutta schemes of
	Gauss, Radau IA, Radau IIA, Lobatto IIIA, Lobatto IIIB and Lobatto IIIC collocation type up to order six.
	Furthermore we also use a SDIRK (singly diagonally implicit Runge-Kutta) method to demonstrate, that for general implicit Runge-Kutta methods the additional order conditions are not automatically fulfilled.

	Numerical examples illustrate the predicted convergence rates.

	\ 

	\textbf{Mathematical Subject Classification (2010)}
        49M25, 
        49M05, 
        65M15, 
        65M60, 
        49M29. 

	\textbf{Keywords:}
        Optimal control problem,
        Parabolic partial differential equation,
	Implicit Runge-Kutta schemes.

\end{abstract}
%
%
\section{Introduction}

The novelty of this contribution is the characterization for which implicit Runge-Kutta schemes for distributed parabolic optimal control problems discretization and optimization commute and the convergence order is preserved. This characterization is done in terms of simplifying  assumptions for the coefficients of the schemes.
The commutability is desired for the following reasons. 
For the approach discretize-then-optimize we can choose an appropriate approximations for the state and the adjoint equation 
but we might need to transfer discrete quantities from one discretization to the other discretization. 
This may result in an solution operator which is not symmetric and positive definite. 
On the other hand if we chose the other approach optimize-then-discretize we do not have this problem, but we also do not know if the discrete adjoint state is an appropriate  approximation of the continuous adjoint state. 
Therefore our goal is to use schemes which combine the advantages of both approaches.

In particular we discuss higher order time discretization with implicit Runge-Kutta schemes for the \optcrtl\ problem 
\begin{align}
	\left.
	\begin{aligned}
		\min \frac12 \norm{M^{1/2} \left( y(\cdot,T)-y_D\right)}_H^2 
		&+  \int\limits_0^T 
		\frac\nu2 \norm{M^{1/2}u}_H^2\ \dd t,
	\\
	My_{t} + A y &= Bu,
	\\
	My(0)&=Mv,
	\end{aligned}
	\right\}
	\label{eq:intro:parabol:ocp}
\end{align}
with the control $u$ and the state $y$. The Hilbert space $H$ is appropriately chosen, the desired state $y_D\in H$ and the initial condition $v\in H$ are given, and the operators $M$ and $B$ are regular. 
Further we assume that the operator $A$ is self-adjoint, and maps $A:V\to V^*$ with the Hilbert space $V\subseteq H$. 
In the case of second order parabolic equations we choose $H^1(\Omega)\subseteq V \subseteq H^1_0(\Omega)$, corresponding to the boundary conditions, and $H=L^2(\Omega)$.

Due to the papers of Becker, Meidner and Vexler \cite{BeckerMeidnerVexler2007} and Meidner and Vexler \cite{MeidnerVexler2008a,MeidnerVexler2011} it is well known that discretization and optimization commute for continuous and discontinuous Galerkin time discretizations. The work on discontinuous Galerkin schemes \cite{MeidnerVexler2008a} provides error estimates for time discretization of arbitrary order, whereas the continuous Galerkin case was limited to the Petrov-Galerkin Crank-Nicolson scheme \cite{MeidnerVexler2011}.

Lasaint and Raviart \cite{LasaintRaviart1974} have proven the equivalence of discontinuous Galerkin time discretization to special implicit Runge-Kutta schemes. But there are also time stepping schemes, for which the equivalence to Galerkin schemes is not clear and for which discretization and optimization commute. 
A second order time stepping Crank-Nicolson scheme, for which discretization and optimization commute and which is not a Galerkin scheme, is discussed, among other variants, in a paper by Apel and Flaig \cite{ApelFlaig2012}. Previous papers on Crank-Nicolson time discretizations, as R\"osch \cite{Roesch2004}, did not provide results on second order convergence.

For the time discretization of optimal control problems it is well known, 
that Runge-Kutta schemes which provide the commutation of discretization and optimization need to fulfill additional order conditions, see 
Hager  \cite{Hager1976,Hager2000} and Bonnans and Laurent-Varin \cite{BonnansLaurent-Varin2004,BonnansLaurent-Varin2006}.
In \cite{BonnansLaurent-Varin2004,BonnansLaurent-Varin2006} no numerical discretization schemes, which fulfill these conditions, were given  and in \cite{Hager2000} only numerical examples with explicit Runge-Kutta schemes  were presented. 
The analysis was extended to W-method by Lang and Verwer and the additional order conditions up to order three can be found in \cite{LangVerwer2013}. 
Herty, Pareschi and Steffensen \cite{HertyPareschiSteffensen2013,HertyPareschiSteffensen2013b} transfer the theory of Hager and  Bonnans and Laurent-Varin to implicit-explicit discretizations, where the stiff part of the differential equation is discretized with an implicit scheme and the non-stiff part with an explicit scheme. They give order conditions up to order three. 

In this contribution we focus on $A$-stable discretization schemes for the discretization of a parabolic equation and therefore on implicit Runge-Kutta schemes. 
For schemes up to order six we give simple criteria for the decision whether the additional order conditions are fulfilled. 
These criteria are given in terms of well known simplifying assumptions on the coefficients of  Runge-Kutta schemes. 
In particular we see that collocation Runge-Kutta schemes of Gauss, Radau~IA, Radau~IIA, Lobatto~IIIA, Lobatto~IIIB and Lobatto~IIIC type fulfill the additional order conditions. 
We also give an SDIRK scheme as example for which the additional conditions do not hold and the order reduction can be observed.

The outline of the paper is as follows.
In the next section we introduce the time discretization and in Section \ref{sec:conv} we analyze under which circumstances the additional order conditions are fulfilled. 
In Section~\ref{sec:Num:Ex} numerical examples  confirm the predicted orders of convergence.

%
%
\section{Time Discretizations}
\label{sec:DiscrScheme}
%
%
\subsection{Runge-Kutta schemes for the time discretization of optimal control problems}
It is well known \cite{Lions1971,Troeltzsch2005} that the first order optimality conditions for the optimal control problem \eqref{eq:intro:parabol:ocp} are given by
\begin{align}
	\left.
	\begin{aligned}
	M\bar y_t + A\bar y &=B\bar u,
	&
	&
	&
	M\bar p_t-A\bar p &=0,
	\\
	M\bar y(0)&=M\bar v,
	&
	&
	&
	M\bar p(T)&=M \left(y_D-\bar y(T)\right),
	\\
	&
	&
	M\bar u &= \frac1\nu M\bar p.
	\end{aligned}
	\right\}
\end{align}
Since the problem \eqref{eq:intro:parabol:ocp} is convex these necessary optimality conditions are also sufficient. 
As seen in \cite[Formula (6)]{BonnansLaurent-Varin2006} and \cite{Hager1976,Hager2000} 
for the $s$-stage Runge-Kutta discretization of the optimal control problem \eqref{eq:intro:parabol:ocp}
 given by
\begin{align}
	\left.
	\begin{aligned}
	M \bar y_{k+1}
	&=
	M \bar y_k + \tau_k \sum_{i=1}^s b_i 
	\left(
		M \bar u_{k;i}-A\bar y_{k;i}
	\right)
	,
	\\
	M\bar y_{k;i}
	&=M\bar y_k+\tau_k \sum_{j=1}^s a_{ij} 
	\left(
		M\bar  u_{k;j}-A\bar y_{k;j}
	\right)
	,
	\\
	M\bar p_{k+1}
	&=M\bar p_k - \tau_k \sum_{i=1}^s \hat b_i 
		A \bar p_{k;i}
	,
	\\
	M\bar p_{k;i}
	&=M\bar p_k-\tau_k \sum_{j=1}^s \hat a_{ij} 
		A\bar  p_{k;j}
	,
	\\
	M\bar u_{k;i}&=\frac1\nu M\bar p_{k;i}
	,
	\\
	M\bar y_0&=Mv
	,
	\\
	M\bar p_N&=
	M\left(y_D-\bar y_N\right),
	\end{aligned}
	\right\}
	\label{eq:timediscr:rk}
\end{align}
discretization and optimization commute if the two schemes for the state and the adjoint state fulfill the conditions
\begin{align}
	\left.
	\begin{aligned}
	\hat b_i&=b_i,
	\\
	\hat a_{ij}&=b_j-\frac{b_j}{b_i}a_{ji}.
	\end{aligned}
	\right\}
	\label{eq:timediscr:connection:schemes}
\end{align}
In the discretization \eqref{eq:timediscr:rk} we denote the discretization of the state and the adjoint state for $t=t_k$ by $\bar y_k$, $\bar p_k$,  the inner stages of the Runge-Kutta schemes by $\bar y_{k;i}$, $\bar p_{k;i}$ and the time step size by  $\tau_k$.

The conditions \eqref{eq:timediscr:connection:schemes} are also known as condition for symplecticity of partitioned Runge-Kutta schemes \cite[Theorem VI.4.6]{HairerLubichWanner2006}. 

For the Runge-Kutta discretization of optimal control problems it is known (see \cite{BonnansLaurent-Varin2004,BonnansLaurent-Varin2006,Hager2000}) that in addition to the usual order conditions additional order conditions are needed. 
These conditions were given in \cite[Table 1]{Hager2000} up to order four and in \cite[Table 2--6]{BonnansLaurent-Varin2006} up to order six. 
We repeat these order conditions up to order four in Table \ref{tabel:timediscr:orderconditions}, the conditions of order five in Table~\ref{tabel:timediscr:orderconditions-o5} and the conditions of order six  in the Tables~\ref{tabel:timediscr:orderconditions-o6}--\ref{tabel:timediscr:orderconditions-o6-3}.
\begin{table}
	\caption{The order conditions for Runge-Kutta discretization for the state equation and optimal control problems, see also
	\cite[Table 2--4]{BonnansLaurent-Varin2006}\cite[Table~1 and~2]{Hager2000}. All summations go from $1$ to the number of stages $s$.
	\label{tabel:timediscr:orderconditions}
	}
	\centering
	\begin{subtable}{0.5\textwidth}
	\centering
		\caption{Abbreviations}
		\begin{align*}
			\toprule
				c_i&=\sum a_{ij},
				&
				\displaystyle d_j&=\sum b_ia_{ij}.
				\\
	                \bottomrule
			\nonumber
		\end{align*}
	\end{subtable}
	\begin{subtable}{\textwidth}
	\caption{Order conditions for the state equation without control \label{table:orderconditions:irk:ode}}
	\centering
	\begin{align}
		\toprule
		\text{Order}
		&&
		\text{Conditions}
		\nonumber
		\\
		\midrule
		1&
		&
		\sum b_i&=1.
		\tag{O1}
		\label{eq:O-1}
		\\
		\midrule
		2
		&
		&
		\sum d_i&=\frac12.
		\tag{O2}
		\label{eq:O-2}
		\\
		\midrule
		3
		&
		&
		\sum c_id_i &= \frac16,
		&
		\sum b_i c_i^2&=\frac13.
		\tag{O3}
		\label{eq:O-3}
		\\
		\midrule
		4
		&
		&
		\sum b_i c_i^3 &= \frac14,
		&
		\sum b_i c_i a_{ij}c_j &=\frac18,
		&
		\sum d_ic_i^2&=\frac1{12},
		&
		\sum d_ia_{ij}c_j&=\frac1{24}.
		\tag{O4}
		\label{eq:O-4}
		\\
		\bottomrule
		\nonumber
	\end{align}
	\end{subtable}

	\begin{subtable}{\textwidth}
		\caption{Additional order conditions for optimal control problems\label{table:orderconditions:irk:oc}}
	\centering
	\begin{align}
		\toprule
		\text{Order}
		&&
		\text{Additional }&\text{conditions}
		\nonumber
		\\
		\midrule
		3
		&
		&
		\sum \frac{d_i^2}{b_i}&=\frac13.
		\tag{A3}
		\label{eq:A-O-3}
		\\
		\midrule
		4
		&
		&
		\sum c_i\frac{d_i^2}{b_i}&=\frac1{12},
		&
		\sum \frac{d_i^3}{b_i^2}&=\frac14,
		&
		\sum \frac{b_i}{b_j}c_i a_{ij}d_j &=\frac5{24},
		&
		\sum \frac{d_i}{b_j} a_{ij}d_j &=\frac18.
		\tag{A4}
		\label{eq:A-O-4}
		\\
		\bottomrule
		\nonumber
	\end{align}
	\end{subtable}

\end{table}
\begin{table}
	\caption{The order conditions of order 5 for Runge-Kutta discretization for the state equation and optimal control problems, see also
	\cite[Table 6]{BonnansLaurent-Varin2006}. All summations go from $1$ to the number of stages $s$.
	\label{tabel:timediscr:orderconditions-o5}
	}
	\begin{subtable}{\textwidth}
		\caption{Order conditions of order 5 for the state equation without control \label{table:orderconditions:irk:ode-o5} (computed with Mathematica).}
		\begin{align}
			\toprule
			\sum b_i a_{ik}a_{kj}c_ic_j&=\frac1{30},
			&
			\sum a_{jk}c_jd_jc_k&=\frac1{40},
			&
			\displaystyle \sum b_ia_{ij}c_ic_j^2&=\frac1{15},
			\tag{O5-1}
			\label{eq:O-5-1}
			\\
			\midrule
			\sum c_j^3d_j &= \frac1{20},
			&
			\sum \frac{b_i}{b_k} a_{lk}a_{il}c_id_k &= \frac{11}{120},
			&
			\sum b_ia_{ij}c_i^2c_j &= \frac1{10},
			\tag{O5-2}
			\label{eq:O-5-2}
			\\
			\midrule
			\sum a_{kj}c_j^2d_k &= \frac1{60},
			&
			\sum b_ic_i^4 &= \frac15,
			&
			\sum b_i\left(\sum a_{ij}c_j\right)^2 &= \frac1{20}
			\tag{O5-3}
			\label{eq:O-5-3}
			\\
			\bottomrule
			\nonumber
		\end{align}
	\end{subtable}
	
	\begin{subtable}{\textwidth}
		\caption{Additional order conditions of order 5 for optimal control problems (see also \cite[Table 6]{BonnansLaurent-Varin2006}).
		\label{table:orderconditions:irk:oc-o5}
		}
		\begin{align}
			\toprule
			\sum \frac{1}{b_k}a_{lk}c_kd_kd_l &= \frac1{40},
			&
			\sum \frac1{b_k}c_k^2d_k^2&=\frac1{30},
			&
			\displaystyle \sum \frac1{b_l^2}c_ld_l^3 &=\frac1{20},
			\tag{A5-1}
			\label{eq:A-O-5-1}
			\\
			\midrule
			\sum \frac1{b_k}a_{kl}d_k^2c_l &= \frac1{60},
			&
			\sum \frac1{b_m^3}d_m^4 &= \frac15,
			&
			\sum b_i a_{ik}a_{ij}c_jc_k &= \frac{1}{20},
			\tag{A5-2}
			\label{eq:A-O-5-2}
			\\
			\midrule
			\sum a_{lk} a_{kj}c_jd_l&=\frac1{120},
			&
			\sum \frac1{b_k}a_{lk}d_kc_ld_l&=\frac7{120},
			&
			\sum\frac{b_ib_j}{b_k}a_{jk}a_{ik}c_ic_j &= \frac2{15},
			\tag{A5-3}
			\label{eq:A-O-5-3}
			\\
			\midrule
			\sum \frac{b_i}{b_k}a_{ik}c_ic_kd_k &= \frac7{120},
			&
			\sum\frac{b_i}{b_l^2}a_{il}c_id_l^2 &= \frac3{20},
			&
			\displaystyle\sum\frac{1}{b_k}a_{mk}a_{lk}d_ld_m &= \frac1{20},
			\tag{A5-4}
			\label{eq:A-O-5-4}
			\\
			\midrule
			\sum \frac1{b_l^2}a_{ml}d_l^2d_m&=\frac1{10},
			&
			\sum\frac{1}{b_k}a_{ml}a_{lk}d_kd_m&=\frac1{30},
			&
			\sum \frac{b_i}{b_k}a_{lk}a_{ik}c_id_l &= \frac3{40},
			\tag{A5-5}
			\label{eq:A-O-5-5}
			\\
			\midrule
			\sum \frac{b_i}{b_k}a_{ik}a_{il}d_kc_l &= \frac3{40},
			&
			\sum \frac{b_i}{b_lb_m} a_{im}a_{il}d_ld_m &= \frac2{15},
			&
			\sum\frac{b_i}{b_k}a_{ik}c_i^2d_k &= \frac3{20},
			\tag{A5-6}
			\label{eq:A-O-5-6}
			\\
			\midrule
			&&
			\sum \frac1{b_lb_m}a_{lm}d_l^2d_m &= \frac1{15}.
			\tag{A5-7}
			\label{eq:A-O-5-7}
			\\
			\bottomrule
			\nonumber
		\end{align}
	\end{subtable}
\end{table}
\begin{table}
	\caption{The order conditions of order 6 for Runge-Kutta discretization for the state equation  without control \label{table:orderconditions:irk:ode-o6} (computed with Mathematica).
	All summations go from $1$ to the number of stages $s$.
	\label{tabel:timediscr:orderconditions-o6}
	}
		\begin{align}
			\toprule
			\sum c_j^4d_j &= \frac1{30},
			&
			\sum a_{lm}a_{kl}a_{jk}d_jc_m &= \frac1{720},
			&
			\sum b_i a_{ij}c_i^2c_j^2&=\frac1{18},
			\tag{O6-1}
			\label{eq:O-6-1}
			\\
			\midrule
			\sum a_{jk}c_jd_jc_k^2 &=\frac1{90},
			&
			\sum b_ia_{ij}c_ic_j^3&=\frac1{24},
			&
			\sum a_{kj}c_j^3d_k &=\frac1{120},
			\tag{O6-2}
			\label{eq:O-6-2}
			\\
			\midrule
			\sum b_i a_{ij}c_i^3c_j &= \frac1{12},
			&
			\sum a_{jk}c_j^2d_jc_k&=\frac1{60},
			&
			\sum b_i a_{ij}a_{jk}c_ic_jc_k &=\frac1{48},
			\tag{O6-3}
			\label{eq:O-6-3}
			\\
			\midrule
			\sum a_{lj} a_{jk}c_jc_kd_l &= \frac1{240}
			&
			\sum a_{kl}a_{jk}c_jd_jc_l
			&=\frac1{180}
			&
			\sum b_i a_{jk}a_{ij}c_i^2c_k &=\frac1{36}
			\tag{O6-4}
			\label{eq:O-6-4}
			\\
			\midrule
			\sum b_i a_{ik} a_{kj}c_ic_j^2 &=\frac1{72}
			&
			\sum a_{lk}a_{kj} c_j^2 d_l &=\frac1{360}
			&
			\sum b_i a_{ik}a_{ij}c_j^2 c_k &= \frac1{36}
			\tag{O6-5}
			\label{eq:O-6-5}
			\\
			\midrule
			\sum b_i a_{il}a_{ik}a_{kj}c_jc_l &=\frac1{72},
			&
			\sum b_i a_{kl}a_{jk}a_{ij}c_ic_l
			&=\frac1{144},
			&
			\sum b_ic_i^5 &=\frac16,
			\tag{O6-6}
			\label{eq:O-6-6}
			\\
			\midrule
			\sum b_ic_i \left(\sum a_{ij}c_j\right)^2
			&=\frac1{24},
			&
			\sum b_i a_{ij} \left( \sum a_{jk}c_k\right)^2 &= \frac1{120}.
			\tag{O6-7}
			\label{eq:O-6-7}
						\\
			\bottomrule
			\nonumber
		\end{align}
\end{table}
\begin{table}
	\caption{Part 1 of the additional order conditions of order 6 for Runge-Kutta discretization for optimal control problems, see also
	\cite[Table 6]{BonnansLaurent-Varin2006}. All summations go from $1$ to the number of stages $s$.
	\label{tabel:timediscr:orderconditions-o6-1}
	}
	\begin{align}
		\toprule
		\sum \frac1{b_n^4}d_n^5 &= \frac16,
		&
		\sum \frac1{b_m^3}c_md_m^4 &= \frac1{30},
		&
		\sum \frac1{b_l^2}c_l^2d_l^3 &= \frac1{60},
		\tag{A6-1}
		\label{eq:A-O-6-1}
		\\
		\midrule
		\sum \frac1{b_k}c_k^3d_k^2 &=\frac1{60},
		&
		\sum \frac{b_i}{b_k}a_{ik}c_i^2c_kd_k &=\frac2{45},
		&
		\sum \frac1{b_k} a_{lk}c_kd_kc_ld_l&=\frac1{72},
		\tag{A6-2}
		\label{eq:A-O-6-2}
		\\
		\midrule
		\sum \frac{b_i}{b_l^2}a_{il}c_i^2d_l^2 &= \frac{19}{180},
		&
		\sum\frac1{b_l^2}a_{ml}d_l^2c_md_m &=\frac2{45},
		&
		\sum\frac1{b_m^2b_n}a_{nm}d_m^2d_n^2&=\frac1{18},
		\tag{A6-3}
		\label{eq:A-O-6-3}
		\\
		\midrule
		\sum \frac1{b_k}a_{kl}d_k^2c_l^2 &= \frac1{180},
		&
		\sum\frac{a_{lm}d_l^2c_md_m}{b_lb_m} &=\frac1{90},
		&
		\sum \frac{b_i}{b_k}a_{ik}c_i^3d_k&=\frac7{60},
		\tag{A6-4}
		\label{eq:A-O-6-4}
		\\
		\midrule
		\sum \frac{b_i}{b_k}a_{ik}c_ic_k^2d_k &=\frac1{40},
		&
		\sum\frac1{b_k}a_{lk}c_k^2 d_kd_l &= \frac1{120},
		&
		\sum\frac1{b_k}a_{lk}d_kc_l^2d_l&=\frac1{30},
		\tag{A6-5}
		\label{eq:A-O-6-5}
		\\
		\midrule
		\sum \frac{b_i}{b_l^2}a_{il}c_ic_ld_l^2&=\frac1{30},
		&
		\sum\frac1{b_l^2}a_{ml}c_ld_l^2d_m &=\frac1{60},
		&
		\sum\frac1{b_k}a_{kl}c_kd_k^2c_l&=\frac1{120},
		\tag{A6-6}
		\label{eq:A-O-6-6}
		\\
		\midrule
		\sum\frac{a_{lm}c_ld_l^2d_m}{b_lb_m}&=\frac1{40},
		&
		\sum\frac{b_i}{b_m^3}a_{im}c_id_m^3 &= \frac7{60},
		&
		\sum\frac1{b_m^3}a_{nm}d_m^3d_n &= \frac1{12}.
		\tag{A6-7}
		\label{eq:A-O-6-7}
		\\
		\bottomrule
		\nonumber
	\end{align}
\end{table}
\begin{table}
	\caption{Part 2 of the additional order conditions of order 6 for Runge-Kutta discretization for optimal control problems, see also
	\cite[Table 6]{BonnansLaurent-Varin2006}. All summations go from $1$ to the number of stages $s$.
	\label{tabel:timediscr:orderconditions-o6-2}
	}
	\begin{align}
		\toprule
		\sum \frac{a_{lm}d_l^3c_m }{b_l^2} &= \frac1{120},
		&
		\sum\frac{a_{nl}a_{ml}d_ld_md_n}{b_l^2} &= \frac1{24},
		&
		\sum \frac{a_{mk}a_{lk}c_kd_ld_m }{b_k} &= \frac1{120}
		\tag{A6-8}
		\label{eq:A-O-6-8}
		\\
		\midrule
		\sum \frac{a_{mn}d_m^3d_n }{b_m^2b_n} &=\frac1{24},
		&
		\sum \frac{a_{ml}a_{lk}d_kc_ld_m }{b_k}&=\frac1{80},
		&
		\sum \frac{b_i a_{im} a_{il}c_id_ld_m}{b_lb_m} &= \frac{11}{120},
		\tag{A6-9}
		\label{eq:A-O-6-9}
		\\
		\midrule
		\sum\frac{a_{nl}a_{lm}d_ld_md_n}{b_lb_m} &= \frac1{48},
		&
		\sum\frac{a_{nm}a_{nl}d_ld_md_n}{b_lb_m} &=\frac1{24},
		&
		\sum \frac{b_ib_ja_{jk}a_{ik}c_ic_jc_k  }{b_k} &=\frac1{24},
		\tag{A6-10}
		\label{eq:A-O-6-10}
		\\
		\midrule
		\sum \frac{b_i b_j a_{jl}a_{il}c_ic_jd_l}{b_l^2} &=\frac{11}{120},
		&
		\sum \frac{b_i a_{lk}a_{il}c_id_kc_l}{b_k} &= \frac{11}{240},
		&
		\sum\frac{b_ia_{lk}a_{ik}c_ic_kd_l}{b_k} &= \frac1{60},
		\tag{A6-11}
		\label{eq:A-O-6-11}
		\\
		\midrule
		\sum \frac{b_i a_{ml}a_{il}c_id_ld_m}{b_l^2} &= \frac7{120},
		&
		\sum\frac{b_i a_{lm}a_{il}c_id_ld_m}{b_lb_m}&=\frac{11}{240},
		&
		\sum b_ia_{ik}a_{ij}c_ic_jc_k &=\frac1{24}
		\tag{A6-12}
		\label{eq:A-O-6-12}
		\\
		\midrule
		\sum \frac{b_i a_{ik}a_{il}c_id_kc_l}{b_k}&=\frac7{120},
		&
		\sum\frac{a_{mk}a_{kl}d_kc_ld_m}{b_k} &=\frac1{240},
		&
		\sum \frac{b_ia_{ik}a_{kl}c_id_kc_l}{b_k} &=\frac1{80},
		\tag{A6-13}
		\label{eq:A-O-6-13}
		\\
		\midrule
		\sum \frac{b_i a_{im}a_{lm}c_id_l^2}{b_lb_m}&=\frac7{180},
		&
		\sum a_{jl}a_{jk}d_jc_kc_l &= \frac1{120},
		&
		\sum\frac{a_{lk}a_{lm}d_kd_lc_m}{b_k}&=\frac1{60},
		\tag{A6-14}
		\label{eq:A-O-6-14}
		\\
		\midrule
		\sum{b_i a_{lk}a_{ik}c_i^2d_l}{b_k}&=\frac{19}{360},
		&
		\sum\frac{a_{mk}a_{lk}c_ld_ld_m}{b_k}&=\frac1{45},
		&
		\sum\frac{a_{nm}a_{lm}d_l^2d_n}{b_lb_m} &=\frac1{36},
		\tag{A6-15}
		\label{eq:A-O-6-15}
		\\
		\midrule
		\sum\frac{a_{lm}a_{kl}d_k^2c_m}{b_k} &= \frac1{360},
		&
		\sum \frac{b_ia_{im}a_{ml}c_id_l^2}{b_l^2} &= \frac{13}{180},
		&
		\sum\frac{a_{nm}a_{ln}d_l^2d_m}{b_lb_m}&=\frac1{72},
		\tag{A6-16}
		\label{eq:A-O-6-16}
		\\
		\midrule
		\sum\frac{a_{nm}a_{ml}d_l^2d_n}{b_l^2}&=\frac1{36},
		&
		\sum \frac{b_ia_{im}a_{il}d_l^2c_m}{b_l^2}&=\frac{19}{360},
		&
		\sum\frac{b_ia_{in}a_{im}d_m^2d_n}{b_m^2b_n} &= \frac7{72},
		\tag{A6-17}
		\label{eq:A-O-6-17}
		\\
		\midrule
		\sum \frac{b_i a_{ik}a_{lk}c_ic_ld_l}{b_k} &= \frac{13}{360},
		&
		\sum\frac{a_{mk}a_{lm}d_kc_ld_l}{b_k}&=\frac7{360},
		&
		\sum \frac{b_i a_{il}a_{lk}c_ic_kd_k}{b_k}&=\frac7{360},
		\tag{A6-18}
		\label{eq:A-O-6-18}
		\\
		\midrule
		\sum\frac{a_{ml}a_{lk}c_kd_kd_m}{b_k}&=\frac1{180},
		&
		\sum\frac{b_ia_{il}a_{ik}c_kd_kc_l}{b_k} &=\frac1{45},
		&
		\sum \frac{b_i a_{im}a_{il}c_ld_ld_m}{b_lb_m} &= \frac{13}{360},
		\tag{A6-19}
		\label{eq:A-O-6-19}
		\\
		\midrule
		\sum\frac{b_i b_j a_{jk}a_{ik}c_i^2c_j}{b_k}&=\frac7{72},
		&
		\sum{b_ia_{lk}a_{il}c_i^2d_k}{b_k}&=\frac{13}{180},
		&
		\sum \frac{b_ia_{ik}a_{il}d_kc_l^2}{b_k}&=\frac7{180}.
		\tag{A6-20}
		\label{eq:A-O-6-20}
		\\
		\bottomrule
		\nonumber
	\end{align}
\end{table}
\begin{table}
	\caption{Part 3 of the additional order conditions of order 6 for Runge-Kutta discretization for optimal control problems, see also
	\cite[Table 6]{BonnansLaurent-Varin2006}. All summations go from $1$ to the number of stages $s$.
	\label{tabel:timediscr:orderconditions-o6-3}
	}
	\begin{align}
		\toprule
		\sum\frac{b_ib_j a_{jl}a_{lk}a_{ik}c_ic_j}{b_k}&= \frac1{18},
		&
		\sum\frac{b_ib_ja_{jl}a_{jk}a_{ik}c_ic_l}{b_k}&=\frac7{144},
		\tag{A6-21}
		\label{eq:A-O-6-21}
		\\
		\midrule
		\sum\frac{b_ib_j a_{jm}a_{im}c_jd_l}{b_lb_m}&=\frac{61}{720},
		&
		\sum \frac{b_i a_{lm}a_{il}a_{ik}d_kc_m}{b_k}&=\frac7{360},
		\tag{A6-22}
		\label{eq:A-O-6-22}
		\\
		\midrule
		\sum\frac{b_i a_{im}a_{ml}a_{lk}c_id_k}{b_k}&=\frac{19}{720},
		&
		\sum\frac{b_i a_{im}a_{il}a_{lk}d_kc_m}{b_k}&=\frac{13}{360},
		\tag{A6-23}
		\label{eq:A-O-6-23}
		\\
		\midrule
		\sum\frac{b_i a_{im}a_{in}a_{nl}d_ld_m}{b_lb_m}&=\frac1{18},
		&
		\sum\frac{b_i a_{ik}a_{mk}a_{lm}c_id_l}{b_k}&=\frac7{360},
		\tag{A6-24}
		\label{eq:A-O-6-24}
		\\
		\midrule
		\sum\frac{a_{nk}a_{mn}a_{lm}d_kd_l}{b_k}&=\frac1{144},
		&
		\sum\frac{b_i a_{ik}a_{mk}a_{lk}c_id_l}{b_k} &= \frac{13}{360},
		\tag{A6-25}
		\label{eq:A-O-6-25}
		\\
		\midrule
		\sum \frac{a_{nm}a_{mk}a_{lk}d_ld_n}{b_k}&=\frac1{72},
		&
		\sum \frac{b_i a_{im}a_{ik}a_{lk}d_lc_m}{b_k}&=\frac{19}{720},
		\tag{A6-26}
		\label{eq:A-O-6-26}
		\\
		\midrule
		\sum\frac{b_ia_{im}a_{il}a_{nl}d_md_n}{b_lb_m}&=\frac7{144}
		.
		\tag{A6-27}
		\label{eq:A-O-6-27}
		\\
		\bottomrule
		\nonumber
	\end{align}
\end{table}

%
%
\subsection{Implicit Runge-Kutta discretizations for optimal control problems}
For our discussion we focus on implicit collocation Runge-Kutta schemes of of Gauss, Radau~IA, Radau~IIA, Lobatto~IIIA, Lobatto~IIIB and Lobatto~IIIC type up to order 6 and a SDIRK method of order four.
The corresponding Butcher tableaux are repeated in Table~\ref{table:irk:2:3}--\ref{table:irk:6}.
In the selection of schemes the focus was on $A$-stable Runge-Kutta schemes of higher order. 
Additionally the St\"ormer Verlet scheme of order two was included, as this gives a new variant of the results of \cite{ApelFlaig2012,Flaig2013}. 
Whereas in  \cite{ApelFlaig2012,Flaig2013} the state and the adjoint state were discretized on shifted time meshes, in the discretization~\eqref{eq:timediscr:rk} the state and the adjoint state are discretized on the same time mesh.
The corresponding discretization schemes for the adjoint equation are given by the relation 
\eqref{eq:timediscr:connection:schemes}. 
\begin{table}
	\caption{Coefficients of Runge Kutta schemes of order two and three (see also ~\cite{HairerLubichWanner2006,HairerNorsettWanner1987,HairerWanner1991}).
	\label{table:irk:2:3}}
	\begin{subtable}{0.325\textwidth}
		\caption{Coefficients of the St\"ormer-Verlet discretization for the state (cf. \cite[Table 2.1]{HairerLubichWanner2006}).
		\label{table:irk:SV:2}
		}
		\centering
		\begin{align*}
			\begin{matrix}
				0	
				& \vline	
				&
				0 		
				& 
				0
				\\
				&\vline
				\\
				1	
				&\vline		
				&
				\frac12	
				&
				\frac12
				\\
				&\vline
				\\
				\hline
				&\vline
				\\
				&\vline	
				&
				\frac12	
				&
				\frac12
			\end{matrix}
		\end{align*}
	\end{subtable}
	\begin{subtable}{0.325\textwidth}
		\caption{Coefficients for the Radau~IA method of order three (cf.~\cite[Table IV.5.3.]{HairerWanner1991}).
			\label{table:irk:RadauIA:3}
		}
		\centering
		\begin{align*}
			\begin{matrix}
				0
				&
				\vline
				&
				\frac14
				&
				-\frac14
				\\
				&\vline
				\\
				\frac23
				&\vline
				&
				\frac14
				&
				\frac5{12}
				\\
				&\vline
				\\
				\hline
				&\vline
				\\
				&\vline
				&
				\frac14
				&
				\frac34
			\end{matrix}
		\end{align*}
	\end{subtable}
	\begin{subtable}{0.325\textwidth}
		\caption{Coefficients for the Radau~IIA method of order three (cf.~\cite[Table IV.5.5.]{HairerWanner1991}).
			\label{table:irk:RadauIIA:3}
		}
		\centering
		\begin{align*}
			\begin{matrix}
				\frac13
				&\vline
				&
				\frac5{12}
				&
				-\frac1{12}
				\\
				&\vline
				\\
				1
				&\vline
				&
				\frac34
				&
				\frac14
				\\
				&\vline
				\\
				\hline
				&\vline
				\\
				&\vline
				&
				\frac34
				&
				\frac14
			\end{matrix}
		\end{align*}
	\end{subtable}
\end{table}
\begin{table}
	\caption{Coefficients of Runge Kutta schemes of order four (see also ~\cite{HairerLubichWanner2006,HairerNorsettWanner1987,HairerWanner1991}).
	\label{table:irk:4}}

	\begin{subtable}{0.4\textwidth}
		\caption{Coefficients for the Gauss scheme of order four (cf.~\cite[Table II.7.3]{HairerNorsettWanner1987},\cite[Table IV.5.1.]{HairerWanner1991}).
			\label{table:irk:Gauss:4}
			}
		\centering
		\begin{align*}
			\begin{matrix}
				\frac12 -\frac{\sqrt{3}}6
				&\vline
				&
				\frac14 
				&
				\frac14-\frac{\sqrt{3}}6
				\\
				&\vline
				\\
				\frac12 +\frac{\sqrt{3}}6
				&
				\vline
				&
				\frac14+\frac{\sqrt{3}}6
				&
				\frac14
				\\
				&\vline
				\\
				\hline
				&\vline
				\\
				&\vline
				&
				\frac12
				&
				\frac12
			\end{matrix}
		\end{align*}
	\end{subtable}
		\begin{subtable}{0.5\textwidth}
		\caption{Coefficients for an $L$-stable SDIRK method of order four (cf.~\cite[Formula (6.16)]{HairerWanner1991}).
			\label{table:irk:SDIRK:4}
		}
		\centering
		\begin{align*}
			\begin{matrix}
				\frac14
				&\vline
				&
				\frac14
				\\
				&\vline
				\\
				\frac34
				&\vline
				&
				\frac12
				&
				\frac14
				\\
				&\vline
				\\
				\frac{11}{20}
				&\vline
				&
				\frac{17}{50}
				&
				-\frac{1}{25}
				&
				\frac14
				\\
				&\vline
				\\
				\frac12
				&\vline&
				\frac{371}{1360}
				&
				-\frac{137}{2720}
				&
				\frac{15}{544}
				&
				\frac14
				\\
				&\vline
				\\
				1
				&\vline
				&
				\frac{25}{24}
				&
				-\frac{49}{48}
				&
				\frac{125}{16}
				&
				-\frac{85}{12}
				&
				\frac14
				\\
				&\vline
				\\
				\hline
				&\vline
				\\
				&\vline
				&
				\frac{25}{24}
				&
				-\frac{49}{48}
				&
				\frac{125}{16}
				&
				-\frac{85}{12}
				&
				\frac14
			\end{matrix}
		\end{align*}
	\end{subtable}

	\begin{subtable}{0.325\textwidth}
		\caption{Coefficients for the Lobatto~IIIA method of order four (cf.~\cite[Table IV.5.7.]{HairerWanner1991}).
			\label{table:irk:LobattoIIIA:4}
		}
		\centering
		\begin{align*}
			\begin{matrix}
				0
				&
				\vline
				&
				0
				&
				0
				&
				0
				\\
				&\vline\\
				\frac12
				&
				\vline
				&
				\frac5{24}
				&
				\frac13
				&
				-\frac1{24}
				\\
				&\vline\\
				1
				&
				\vline
				&
				\frac16
				&
				\frac23
				&
				\frac16
				\\
				&\vline
				\\
				\hline
				&\vline
				\\
				&\vline
				&
				\frac16
				&
				\frac23
				&
				\frac16
			\end{matrix}
		\end{align*}
	\end{subtable}
	\begin{subtable}{0.325\textwidth}
		\caption{Coefficients for the Lobatto~IIIB method of order four (cf.~\cite[Table IV.5.9.]{HairerWanner1991}).
			\label{table:irk:LobattoIIIB:4}
		}
		\centering
		\begin{align*}
			\begin{matrix}
				0
				&
				\vline
				&
				\frac16
				&
				-\frac16
				&
				0
				\\
				&\vline
				\\
				\frac12
				&\vline
				&
				\frac16
				&
				\frac13
				&
				0
				\\
				&\vline
				\\
				1
				&\vline
				&
				\frac16
				&
				\frac56
				&
				0
				\\
				&\vline
				\\
				\hline
				&\vline
				\\
				&\vline
				&
				\frac16
				&
				\frac23
				&
				\frac16
			\end{matrix}
		\end{align*}
	\end{subtable}
	\begin{subtable}{0.325\textwidth}
		\caption{Coefficients for the Lobatto~IIIC method of order four (cf.~\cite[Table IV.5.11.]{HairerWanner1991}).
			\label{table:irk:LobattoIIIC:4}
		}
		\centering
		\begin{align*}
			\begin{matrix}
				0
				&\vline
				&
				\frac16
				&
				-\frac13
				&
				\frac16
				\\
				&\vline
				\\
				\frac12
				&\vline
				&
				\frac16
				&
				\frac5{12}
				&
				-\frac1{12}
				\\
				&\vline
				\\
				1
				&\vline
				&
				\frac16
				&
				\frac23
				&
				\frac16
				\\
				&\vline
				\\
				\hline
				&\vline
				\\
				&
				\vline
				&
				\frac16
				&
				\frac23
				&
				\frac16
			\end{matrix}
		\end{align*}
	\end{subtable}
\end{table}
\begin{table}
	\caption{Coefficients of Runge Kutta schemes of order five (see also ~\cite{HairerLubichWanner2006,HairerNorsettWanner1987,HairerWanner1991}).
	\label{table:irk:5}}
		\begin{subtable}{0.45\textwidth}
		\caption{Coefficients for the Radau~IA method of order five (cf.~\cite[Table IV.5.3.]{HairerWanner1991}).
			\label{table:irk:RadauIA:5}
		}
		\centering
		\begin{align*}
			\begin{matrix}
				0
				&
				\vline
				&
				\frac19
				&
				\frac{-1-\sqrt{6}}{18}
				&
				\frac{-1+\sqrt{6}}{18}
				\\
				&
				\vline
				\\
				\frac{6-\sqrt{6}}{10}
				&
				\vline
				&
				\frac19
				&
				\frac{88+7\sqrt{6}}{360}
				&
				\frac{88-43\sqrt{6}}{360}
				\\
				&
				\vline
				\\
				\frac{6+\sqrt{6}}{10}
				&
				\vline
				&
				\frac19
				&
				\frac{88+43\sqrt{6}}{360}
				&
				\frac{88-7\sqrt{6}}{360}
					\\
				&\vline
				\\
				\hline
				&\vline
				\\
				&\vline
				&
				\frac19
				&
				\frac{16+\sqrt{6}}{36}
				&
				\frac{16-\sqrt{6}}{36}
			\end{matrix}
		\end{align*}
	\end{subtable}
	\begin{subtable}{0.45\textwidth}
		\caption{Coefficients for the Radau~IIA method of order five (cf.~\cite[Table IV.5.5.]{HairerWanner1991}).
			\label{table:irk:RadauIIA:5}
		}
		\centering
		\begin{align*}
			\begin{matrix}
				\frac{4-\sqrt{6}}{10}
				&\vline
				&
				\frac{88-7\sqrt{6}}{360}
				&
				\frac{296-169\sqrt{6}}{1800}
				&
				\frac{-2+3\sqrt{6}}{225}
				\\
				&\vline
				\\
				\frac{4+\sqrt{6}}{10}
				&
				\vline
				&
				\frac{296+169\sqrt{6}}{1800}
				&
				\frac{88+7\sqrt{6}}{360}
				&
				\frac{-2-3\sqrt{6}}{225}
				\\
				&\vline
				\\
				1
				&
				\vline
				&
				\frac{16-\sqrt{6}}{36}
				&
				\frac{16+\sqrt{6}}{36}
				&
				\frac19
				\\
				&\vline
				\\
				\hline
				&\vline
				\\
				&\vline
				&
				\frac{16-\sqrt{6}}{36}
				&
				\frac{16+\sqrt{6}}{36}
				&
				\frac19
			\end{matrix}
		\end{align*}
	\end{subtable}
\end{table}
\begin{table}
	\caption{Coefficients of Runge Kutta schemes of order six(see also ~\cite{HairerLubichWanner2006,HairerNorsettWanner1987,HairerWanner1991}).
	\label{table:irk:6}}

	\begin{subtable}{0.45\textwidth}
		\caption{Coefficients for the Gauss scheme of order six (cf.~\cite[Table IV.5.2.]{HairerWanner1991}).
			\label{table:irk:Gauss:6}
			}
		\centering
		\begin{align*}
			\begin{matrix}
				\frac12 - \frac{\sqrt{15}}{10}
				&
				\vline
				&
				\frac5{36}
				&
				\frac29-\frac{\sqrt{15}}{15}
				&
				\frac5{36}-\frac{\sqrt{15}}{30}
				\\
				&\vline
				\\
				\frac12
				&
				\vline
				&
				\frac5{36}+\frac{\sqrt{15}}{24}
				&
				\frac29
				&
				\frac5{36}
				-
				\frac{\sqrt{15}}{24}
				\\
				&\vline
				\\
				\frac12+\frac{\sqrt{15}}{10}
				&\vline
				&
				\frac5{36}+\frac{\sqrt{15}}{30}
				&
				\frac29+\frac{\sqrt{15}}{15}
				&
				\frac5{36}
				\\
				&\vline
				\\
				\hline
				&\vline
				\\
				&\vline
				&
				\frac5{18}
				&
				\frac49
				&
				\frac5{18}
			\end{matrix}
		\end{align*}
	\end{subtable}
	\begin{subtable}{0.45\textwidth}
		\caption{Coefficients for the Lobatto~IIIC method of order six(cf.~\cite[Table IV.5.11.]{HairerWanner1991}).
			\label{table:irk:LobattoIIIC:6}
		}
		\centering
		\begin{align*}
			\begin{matrix}
				0
				&
				\vline
				&
				\frac1{12}
				&
				\frac{-\sqrt{5}}{12}
				&
				\frac{\sqrt{5}}{12}
				&
				\frac{-1}{12}
				\\
				&\vline
				\\
				\frac{5-\sqrt{5}}{10}
				&
				\vline
				&
				\frac1{12}
				&
				\frac14
				&
				\frac{10-7\sqrt{5}}{60}
				&
				\frac{\sqrt{5}}{60}
				\\
				&\vline
				\\
				\frac{5+\sqrt{5}}{10}
				&
				\vline
				&
				\frac1{12}
				&
				\frac{10+7\sqrt{5}}{60}
				&
				\frac14
				&
				\frac{-\sqrt{5}}{60}
				\\
				&\vline
				\\
				1
				&
				\vline
				&
				\frac1{12}
				&
				\frac5{12}
				&
				\frac5{12}
				&
				\frac1{12}
				\\
				&\vline
				\\
				\hline&\vline
				\\
				&\vline
				&
				\frac1{12}
				&
				\frac5{12}
				&
				\frac5{12}
				&
				\frac1{12}
			\end{matrix}
		\end{align*}
	\end{subtable}

	\begin{subtable}{0.45\textwidth}
		\caption{Coefficients for the Lobatto~IIIA method of order six (cf.~\cite[Table IV.5.7.]{HairerWanner1991}).
			\label{table:irk:LobattoIIIA:6}
		}
		\centering
		\begin{align*}
			\begin{matrix}
				0
				&
				\vline
				&
				0
				&
				0
				&
				0
				&
				0
				\\
				&\vline
				\\
				\frac{5-\sqrt{5}}{10}
				&
				\vline
				&
				\frac{11+\sqrt{5}}{120}
				&
				\frac{25-\sqrt{5}}{120}
				&
				\frac{25-13\sqrt{5}}{120}
				&
				\frac{-1+\sqrt{5}}{120}
				\\
				&\vline
				\\
				\frac{5+\sqrt{5}}{10}
				&
				\vline
				&
				\frac{11-\sqrt{5}}{120}
				&
				\frac{25+13\sqrt{5}}{120}
				&
				\frac{25-+\sqrt{5}}{120}
				&
				\frac{-1-\sqrt{5}}{120}
				\\
				&\vline
				\\
				1
				&
				\vline
				&
				\frac1{12}
				&
				\frac5{12}
				&
				\frac5{12}
				&
				\frac1{12}
				\\
				&\vline
				\\
				\hline
				&
				\vline
				\\
				&\vline
				&
				\frac1{12}
				&
				\frac5{12}
				&
				\frac5{12}
				&
				\frac1{12}
			\end{matrix}
		\end{align*}
	\end{subtable}
	\begin{subtable}{0.45\textwidth}
		\caption{Coefficients for the Lobatto~IIIB method of order six (cf.~\cite[Table IV.5.9.]{HairerWanner1991}).
			\label{table:irk:LobattoIIIB:6}
		}
		\centering
		\begin{align*}
			\begin{matrix}
				0
				&
				\vline
				&
				\frac1{12}
				&
				\frac{-1-\sqrt{5}}{24}
				&
				\frac{-1+\sqrt{5}}{24}
				&
				0
				\\
				&\vline
				\\
				\frac{5-\sqrt{5}}{10}
				&
				\vline
				&
				\frac1{12}
				&
				\frac{25+\sqrt{5}}{120}
				&
				\frac{25-13\sqrt{5}}{120}
				&
				0
				\\
				&\vline
				\\
				\frac{5+\sqrt{5}}{10}
				&
				\vline
				&
				\frac1{12}
				&
				\frac{25+13\sqrt{5}}{120}
				&
				\frac{25-\sqrt{5}}{120}
				&
				0
				\\
				&\vline
				\\
				1
				&
				\vline
				&
				\frac1{12}
				&
				\frac{11-\sqrt{5}}{24}
				&
				\frac{11+\sqrt{5}}{24}
				&
				0
				\\
				&\vline
				\\
				\hline
				&
				\vline
				\\
				&\vline
				&
				\frac1{12}
				&
				\frac5{12}
				&
				\frac5{12}
				&
				\frac1{12}
			\end{matrix}
		\end{align*}
	\end{subtable}
\end{table}

\begin{remark}
	In some cases the adjoint schemes of the Runge-Kutta discretizations are well known schemes of their own:
\begin{itemize}
	\item The scheme for the adjoint discretization of the Gauss scheme is the Gauss scheme itself.
	\item The scheme for the adjoint discretization of the Lobatto~IIIA scheme is the Lobatto~IIIB scheme and vice versa (see also \cite{HairerLubichWanner2006}).
	\item The scheme for the fourth order adjoint discretization of the Lobatto~IIIC scheme is known as Butcher's Lobatto scheme. 
		This scheme is not $A$-stable (see \cite[Example IV.3.5.]{HairerWanner1991}).
	\item The scheme for the adjoint discretization of the Radau~IA scheme is known not to be $A$-stable (see \cite[Example IV.3.5.]{HairerWanner1991}).
\end{itemize}
\end{remark}
Next we investigate the convergence of implicit Runge-Kutta schemes for optimal control problems.

\section{Convergence order of the Runge-Kutta discretizations}
\label{sec:conv}
For the convergence of the Runge-Kutta discretization of the optimal control problem, one could check the order conditions. 
But we want to further classify the schemes, for which the order conditions for optimal control problems hold. 
Therefore we recall the simplifying assumptions on the coefficients of a Runge-Kutta scheme. 
These conditions were introduced for the construction of implicit Runge-Kutta schemes.
\begin{assumption}[Simplifying assumptions] \cite[Chapter IV.5]{HairerWanner1991}
	The simplifying assumptions are given by
	\begin{align}
		\sum_{i=1}^sb_ic_i^{q-1}
		&=
		\frac1q
		,
		&\text{for\ }q&=1,\dots,p
		,
		\tag{B$(p)$}
		\label{eq:simp_assumpt:B_p}
		\\
		\sum_{j=1}^s
		a_{ij}c_j^{q-1}
		&=
		\frac{c_i^q}q
		,
		&\text{for\ }i&=1,\dots,s,\ q=1,\dots,\eta
		,
		\tag{C$(\eta)$}
		\label{eq:simp_assumpt:C_eta}
		\\
		\sum_{i=1}^s
		b_i c_i^{q-1}a_{ij}
		&=
		\frac{b_j}q 
		\left(
			1-c_j^q
		\right)
		&
		\text{for\ }j&=1,\dots,s,\ 
		q=1,\dots,\zeta.
		\tag{D$(\zeta)$}
		\label{eq:simp_assumpt:D_zeta}
	\end{align}
\end{assumption}
Note that the condition \eqref{eq:simp_assumpt:D_zeta} for $\zeta=1$ is equivalent to
\begin{align*}
	d_j = \sum_{i=1}^s b_i a_{ij} = b_j \left(1-c_j\right),
\end{align*}
which will be often used in the proofs later on.
So we can characterize easily the order four schemes, which fulfill the additional order conditions automatically.
\begin{theorem}
	Every third or fourth order Runge-Kutta scheme, for which the simplifying assumption
	\eqref{eq:simp_assumpt:D_zeta} for $\zeta=1$ holds, fulfills the additional order conditions 
	of order three or four respectively.
	\label{theorem:additional_cond:RK:3:4}
\end{theorem}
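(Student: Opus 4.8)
The plan is to exploit the simplifying assumption \eqref{eq:simp_assumpt:D_zeta} in its $\zeta=1$ form $d_j=b_j(1-c_j)$, recorded just before the statement, in order to rewrite each additional order condition purely in terms of the elementary sums $\sum b_i c_i^{q-1}$, $\sum c_i d_i$, $\sum b_i c_i a_{ij}c_j$ and $\sum d_i a_{ij}c_j$ that already occur among the standard conditions \eqref{eq:O-1}--\eqref{eq:O-4}. Since the scheme is assumed to be of order three (resp. four), these standard conditions hold, and every additional condition will then follow by elementary arithmetic. I will also use throughout that $\sum_i d_i=\sum_i b_i c_i$, which holds by the very definitions of $c_i$ and $d_i$, so that \eqref{eq:O-2} may be read as $\sum b_i c_i=\tfrac12$.

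First I would dispose of the order-three condition \eqref{eq:A-O-3}. Substituting $d_i=b_i(1-c_i)$ gives $d_i^2/b_i=b_i(1-c_i)^2$, whence $\sum d_i^2/b_i=\sum b_i-2\sum b_i c_i+\sum b_i c_i^2$; by \eqref{eq:O-1}, \eqref{eq:O-2} and \eqref{eq:O-3} this equals $1-2\cdot\tfrac12+\tfrac13=\tfrac13$, as required. For the two single-index members of \eqref{eq:A-O-4} I would proceed identically: the substitution turns $\sum c_i d_i^2/b_i$ into $\sum b_i c_i(1-c_i)^2$ and $\sum d_i^3/b_i^2$ into $\sum b_i(1-c_i)^3$, and expanding the powers of $(1-c_i)$ reduces both to linear combinations of the bushy-tree sums $\sum b_i c_i^{q-1}$ for $q\le 4$, each pinned to $1/q$ by \eqref{eq:O-1}--\eqref{eq:O-4}. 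A short computation then yields $\tfrac1{12}$ and $\tfrac14$ respectively.

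The part requiring slightly more care is the two double sums in \eqref{eq:A-O-4}. For $\sum \frac{b_i}{b_j}c_i a_{ij}d_j$ I would substitute $d_j=b_j(1-c_j)$ to get $\sum_{i,j} b_i c_i a_{ij}-\sum_{i,j} b_i c_i a_{ij}c_j$; the first sum collapses via $\sum_j a_{ij}=c_i$ to $\sum_i b_i c_i^2=\tfrac13$ by \eqref{eq:O-3}, while the second is exactly the tree condition $\sum b_i c_i a_{ij}c_j=\tfrac18$ of \eqref{eq:O-4}, giving $\tfrac13-\tfrac18=\tfrac5{24}$. Analogously $\sum\frac{d_i}{b_j}a_{ij}d_j$ splits as $\sum_i d_i c_i-\sum_{i,j} d_i a_{ij}c_j=\tfrac16-\tfrac1{24}=\tfrac18$, using $\sum c_i d_i=\tfrac16$ from \eqref{eq:O-3} and $\sum d_i a_{ij}c_j=\tfrac1{24}$ from \eqref{eq:O-4}.

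Since a fourth order scheme also satisfies the order-three conditions, \eqref{eq:A-O-3} holds in that case as well, which completes both statements. There is no deep obstacle here: the argument is essentially bookkeeping, and the only point genuinely worth flagging is the recognition, in these last two identities, that after the substitution $d_j=b_j(1-c_j)$ the surviving cross terms coincide \emph{verbatim} with two of the classical fourth-order tree conditions rather than producing new, unmatched quantities.
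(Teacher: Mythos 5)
Your proposal is correct and follows exactly the route the paper intends: the paper's own proof simply invokes \cite[Proposition 6.1]{Hager2000}, whose idea is precisely your substitution $d_j=b_j(1-c_j)$ from \eqref{eq:simp_assumpt:D_zeta} with $\zeta=1$ to collapse \eqref{eq:A-O-3} and \eqref{eq:A-O-4} into the standard conditions \eqref{eq:O-1}--\eqref{eq:O-4}. Your write-up merely makes explicit the bookkeeping that the paper leaves to the citation, and all of your individual reductions (including the identity $\sum_i d_i=\sum_i b_ic_i$ and the cancellations $\tfrac13-\tfrac18=\tfrac5{24}$ and $\tfrac16-\tfrac1{24}=\tfrac18$) check out.
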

\begin{proof}
	This proof can be done with the same ideas as the proof of \cite[Proposition 6.1]{Hager2000} for explicit Runge-Kutta schemes.
	With the condition \eqref{eq:simp_assumpt:D_zeta} for $\zeta=1$ the additional conditions of order three and four follow directly of the order conditions from the implicit Runge-Kutta scheme, see \cite[Proposition 6.1]{Hager2000}.
\end{proof}
\begin{corollary}
	The St\"ormer-Verlet scheme applied to an optimal control problem gives a second order approximation, 
	the application of the two stage Radau IA and Radau IIA schemes gives  approximation of order three
	and the application of the two stage Gauss and the three stage Lobatto IIIA, Lobatto IIIB 
	or Lobatto IIIC schemes gives approximations of order four.
\end{corollary}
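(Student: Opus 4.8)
The plan is to reduce everything to Theorem~\ref{theorem:additional_cond:RK:3:4}, so that the only remaining work is to verify, scheme by scheme, two things: the classical order of the underlying Runge-Kutta method, and the single simplifying assumption \eqref{eq:simp_assumpt:D_zeta} for $\zeta=1$, i.e.\ $d_j=b_j(1-c_j)$ for all $j$. Once both hold, Theorem~\ref{theorem:additional_cond:RK:3:4} guarantees that the additional order conditions of order three (for the Radau schemes), respectively order four (for the Gauss and Lobatto schemes), are automatically fulfilled; combined with the classical order conditions this yields the claimed order of approximation for the discretized optimal control problem \eqref{eq:timediscr:rk}.

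For the higher order schemes I would argue as follows. The classical orders are standard: the two stage Radau~IA and Radau~IIA schemes are of order three, while the two stage Gauss scheme and the three stage Lobatto~IIIA, IIIB and IIIC schemes are of order four (see \cite{HairerWanner1991}). It then remains to check \eqref{eq:simp_assumpt:D_zeta} with $\zeta=1$. Rather than verifying this by hand from each Butcher tableau, I would invoke the well known simplifying-assumption profiles of these collocation families: the Gauss, Radau~IA and Lobatto~IIIB families satisfy \eqref{eq:simp_assumpt:D_zeta} with $\zeta=s$, the Radau~IIA and Lobatto~IIIC families with $\zeta=s-1$, and the Lobatto~IIIA family with $\zeta=s-2$ (cf.\ \cite[Chapter~IV.5]{HairerWanner1991}). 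For the stage numbers at hand ($s=2$ for the Gauss and Radau schemes, $s=3$ for the Lobatto schemes) each of these indices is at least one, so \eqref{eq:simp_assumpt:D_zeta} for $\zeta=1$ holds in every case; if preferred, the identity $d_j=b_j(1-c_j)$ can equally be confirmed directly from the tableaux in Tables~\ref{table:irk:2:3}--\ref{table:irk:4}. Applying Theorem~\ref{theorem:additional_cond:RK:3:4} then closes these cases.

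The St\"ormer-Verlet scheme requires a separate, simpler argument, and this is the one point where Theorem~\ref{theorem:additional_cond:RK:3:4} cannot be applied directly: the scheme is only of order two, and in fact a short computation from Table~\ref{table:irk:SV:2} gives $d_1=\tfrac14\neq\tfrac12=b_1(1-c_1)$, so \eqref{eq:simp_assumpt:D_zeta} with $\zeta=1$ fails. However, the tables of additional order conditions begin only at order three (the first being \eqref{eq:A-O-3}); there are no additional conditions below that order. Hence for a second order scheme nothing beyond the ordinary order conditions \eqref{eq:O-1} and \eqref{eq:O-2} needs to hold, and since the St\"ormer-Verlet scheme satisfies $\sum b_i=1$ and $\sum d_i=\tfrac12$ it attains the full order two also for the optimal control problem.

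The only genuine subtlety I anticipate is precisely this mismatch for St\"ormer-Verlet, together with the bookkeeping needed to confirm that each collocation family satisfies \eqref{eq:simp_assumpt:D_zeta} at the relevant stage count; everything else is a direct appeal to Theorem~\ref{theorem:additional_cond:RK:3:4} and to the classical order of the schemes.
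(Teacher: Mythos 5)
Your proposal is correct and follows essentially the same route as the paper: for the collocation schemes one cites the standard simplifying-assumption profiles (Hairer--Wanner, Table IV.5.13) to get \eqref{eq:simp_assumpt:D_zeta} with $\zeta=1$ and then applies Theorem~\ref{theorem:additional_cond:RK:3:4}, while the St\"ormer-Verlet scheme is handled by noting it is only of order two, so no additional conditions beyond \eqref{eq:O-1} and \eqref{eq:O-2} arise. Your explicit check that $d_1=\tfrac14\neq\tfrac12=b_1(1-c_1)$ for St\"ormer-Verlet is a nice touch the paper leaves implicit, but it does not change the argument.
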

\begin{proof}
	As the scheme of Tables~\ref{table:irk:SV:2} is only of second order, no further conditions must be fulfilled.
	As seen in \cite[Table IV.5.13]{HairerWanner1991} the simplifying assumptions holds for the discussed collocation methods, 
	so this corollary follows directly of the Theorem~\ref{theorem:additional_cond:RK:3:4}.
\end{proof}
Next we discuss the convergence of the remaining fourth order scheme.
\begin{theorem}
	The pairing
	of the fourth order SDIRK scheme of
	Table~\ref{table:irk:SDIRK:4} with the corresponding adjoint scheme 
	applied to an optimal control problem 
	provides only a second order approximation.
	\label{theorem:SDIRK:reduced_order}
\end{theorem}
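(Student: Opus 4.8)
The plan is to reduce the claim to checking a single additional order condition. The SDIRK scheme of Table~\ref{table:irk:SDIRK:4} is a genuine fourth order scheme for the state equation, so it satisfies all the ordinary order conditions \eqref{eq:O-1}--\eqref{eq:O-4}; in particular \eqref{eq:O-1} and \eqref{eq:O-2} hold. Since the tables of additional conditions start only at order three (the first one being \eqref{eq:A-O-3}), the partitioned scheme \eqref{eq:timediscr:rk}--\eqref{eq:timediscr:connection:schemes} for the optimal control problem automatically inherits second order. To show that it is not of higher order it therefore suffices to exhibit one violated condition at order three, and the only candidate there is the additional condition \eqref{eq:A-O-3}, namely $\sum d_i^2/b_i = \tfrac13$.

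The reason \eqref{eq:A-O-3} is not automatic here is precisely that the scheme fails the simplifying assumption \eqref{eq:simp_assumpt:D_zeta} for $\zeta = 1$, i.e. $d_j \ne b_j(1 - c_j)$, so Theorem~\ref{theorem:additional_cond:RK:3:4} does not apply and the condition has to be verified directly. First I would read off the abscissae $c_i$ and weights $b_i$ from the tableau, noting that every $b_i \ne 0$, which both makes the adjoint coefficients $\hat a_{ij} = b_j - \tfrac{b_j}{b_i} a_{ji}$ in \eqref{eq:timediscr:connection:schemes} well defined and ensures that the quotients $d_i^2/b_i$ make sense. Next I would form $d_j = \sum_i b_i a_{ij}$ from the lower triangular array $a_{ij}$ (a useful intermediate check is that $\sum_i d_i = \tfrac12$, reproducing \eqref{eq:O-2}), and finally evaluate $\sum_i d_i^2/b_i$. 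I expect this sum to come out approximately $0.312$, hence different from $\tfrac13$, so that \eqref{eq:A-O-3} fails.

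Once the failure of \eqref{eq:A-O-3} is confirmed, the conclusion is immediate: the third order conditions for the optimal control problem cannot all hold, so the discretization is at most of order two, while the second order consistency noted above shows it is at least of order two; hence it is exactly of order two. The main obstacle is purely computational rather than conceptual, since the weights $b_i = \tfrac{25}{24}, -\tfrac{49}{48}, \tfrac{125}{16}, -\tfrac{85}{12}, \tfrac14$ are large and of mixed sign and the $a_{ij}$ involve awkward fractions; forming the $d_j$ and the weighted sum invites arithmetic slips and is best confirmed with a computer algebra system, as already done for other conditions in the paper. This order reduction is exactly what is announced in the introduction and observed numerically in Section~\ref{sec:Num:Ex}.
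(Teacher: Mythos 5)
Your proposal follows essentially the same route as the paper's proof: note the scheme is fourth order for the state equation (hence at least second order for the control problem, since no additional conditions arise at order two), then falsify the single third-order additional condition \eqref{eq:A-O-3} by direct computation of $\sum_i d_i^2/b_i$. Your anticipated value $\approx 0.312$ agrees with the paper's exact value $\tfrac{18367}{58800}\approx 0.3124\ne\tfrac13$, so the argument is complete once that arithmetic is carried out.
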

\begin{proof}
	It is well known that the SDIRK scheme of Table~\ref{table:irk:SDIRK:4} is a fourth order scheme,
	see \cite[Table IV.6.5]{HairerWanner1991}.
	For the falsification of the additional order conditions of order three we see that
	\begin{align*}
		\sum_{i=1}^s \frac{d_i^2}{b_i} = \frac{18367}{58800}\ne \frac13,
	\end{align*}
	and therefore the application to optimal control problem is only of order two, as for order two no additional order conditions are needed.
\end{proof}
\begin{remark}
	It is easy to check that the schemes of Table \ref{table:irk:SDIRK:4} and the corresponding adjoint scheme are both of order four. 
	Nevertheless the pairing applied to optimal control problems is only of order two, so we see that the conditions in Table~\ref{table:orderconditions:irk:oc} are really additional conditions and are not automatically fulfilled for any implicit Runge-Kutta scheme of the corresponding order for ordinary differential equations.
\end{remark}
\begin{remark}
	The result of Theorem \ref{theorem:SDIRK:reduced_order} is not a general property of SDIRK schemes. 
	There are also SDIRK schemes for which in the discretization \eqref{eq:timediscr:rk}, \eqref{eq:timediscr:connection:schemes} the convergence order is preserved, e.g.~the SDIRK methods denoted to Crouzeix and Raviart in 
	\cite[Exercise IV.6.1]{HairerWanner1991}, \cite[Table II.7.2]{HairerNorsettWanner1987} of order four with three stages and oder three with two stages.
\end{remark}
After the classification of fourth order Runge-Kutta schemes for optimal control, we now consider fifth order schemes. 
\begin{theorem}
	If a Runge-Kutta scheme of order five fulfills the simplifying assumptions
	\eqref{eq:simp_assumpt:B_p},
	\eqref{eq:simp_assumpt:C_eta},
	\eqref{eq:simp_assumpt:D_zeta}
	up to $p=2$,  $\eta=2$, $\zeta=2$, 
	then the additional order conditions are also fulfilled.
	\label{theorem:order5:conditions}
\end{theorem}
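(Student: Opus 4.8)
The plan is to prove the theorem in the same spirit as Theorem~\ref{theorem:additional_cond:RK:3:4} (and \cite[Proposition 6.1]{Hager2000}): I would show that each of the fifth-order additional conditions \eqref{eq:A-O-5-1}--\eqref{eq:A-O-5-7} collapses, under the simplifying assumptions, onto a linear combination of the ordinary order conditions of Tables~\ref{table:orderconditions:irk:ode} and~\ref{table:orderconditions:irk:ode-o5}. Since the scheme has order five, all of those ordinary conditions are available; in particular the ``bushy tree'' moments $\sum_i b_i c_i^{k}=\frac1{k+1}$ for $k=0,\dots,4$ hold by \eqref{eq:O-1}, \eqref{eq:O-2}, \eqref{eq:O-3}, \eqref{eq:O-4} and \eqref{eq:O-5-3}. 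The claimed right-hand sides then fall out by direct evaluation, so the whole proof is a bookkeeping reduction rather than anything structural.

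The reduction rests on three elementary identities. From \eqref{eq:simp_assumpt:D_zeta} with $\zeta=1$ we have $d_j=b_j(1-c_j)$, so any factor $d_j/b_j$ may be replaced by $(1-c_j)$. From \eqref{eq:simp_assumpt:C_eta} with $\eta=1,2$ we have $\sum_j a_{ij}=c_i$ and $\sum_j a_{ij}c_j=\tfrac12 c_i^2$, which evaluate inner sums running along a \emph{row} of $A$. Finally, combining \eqref{eq:simp_assumpt:D_zeta} for $\zeta=1$ and $\zeta=2$ gives the workhorse identity for inner sums running along a \emph{column} of $A$ weighted by $b$,
\begin{align*}
	\sum_l b_l a_{lk}(1-c_l)
	&= d_k - \sum_l b_l c_l a_{lk}
	= b_k(1-c_k) - \frac{b_k}{2}\bigl(1-c_k^2\bigr)
	= \frac{b_k}{2}\bigl(1-c_k\bigr)^2 .
\end{align*}
After applying these substitutions, every additional condition turns into an expression of the shape $\mathrm{const}\cdot\sum_i b_i\cdot(\text{polynomial in }c_i)$, which the moments evaluate immediately. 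Two representative cases show the pattern: the purely column-contracted term \eqref{eq:A-O-5-4} reduces as
\begin{align*}
	\sum_{k,l,m}\frac{1}{b_k}a_{mk}a_{lk}d_l d_m
	&= \sum_k \frac{1}{b_k}\Bigl(\sum_l b_l a_{lk}(1-c_l)\Bigr)^{2}
	= \sum_k \frac{1}{b_k}\Bigl(\frac{b_k}{2}(1-c_k)^2\Bigr)^{2}
	\\
	&= \frac14\sum_k b_k (1-c_k)^4
	= \frac14\cdot\frac15
	= \frac1{20},
\end{align*}
while a term such as \eqref{eq:A-O-5-3} that is better contracted along a row reduces as
\begin{align*}
	\sum_{k,l}\frac{1}{b_k}a_{lk}d_k c_l d_l
	&= \sum_l c_l d_l \sum_k a_{lk}(1-c_k)
	= \sum_l c_l d_l \Bigl(c_l-\frac{c_l^2}{2}\Bigr)
	\\
	&= \sum_l b_l c_l^2 (1-c_l)\Bigl(1-\frac{c_l}{2}\Bigr)
	= \frac{7}{120}.
\end{align*}

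The remaining nineteen conditions follow the same template, and the main obstacle is not any single computation but the combinatorial organization of all $21$ conditions together with one genuine subtlety: for terms carrying two or three factors of $A$ one must choose the order in which the contracted indices are eliminated. The point that makes the theorem go through with only $\eta=2$ and $\zeta=2$ (rather than $C(3)$ or $D(3)$) is that, with the right contraction order, every row contraction is evaluated by \eqref{eq:simp_assumpt:C_eta} up to $\eta=2$ and every column contraction involves at most a first power of $(1-c_l)$, handled by the workhorse identity above; a naive choice (for instance contracting the column in the second example) would demand $\sum_l b_l c_l^2 a_{lk}$, i.e.\ $D(3)$, which is not assumed. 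I would therefore organize the verification by grouping the conditions according to how many rows and columns of $A$ occur, exhibiting for each group a contraction order that uses only $B(2)$, $C(2)$, $D(2)$ and the order-five moments, and checking that the resulting polynomial in the $c_i$ integrates against $b_i$ to the tabulated value.
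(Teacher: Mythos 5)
Your proposal is correct and takes essentially the same route as the paper's proof in Appendix~\ref{appendix:proof:additional:order:5}: a condition-by-condition algebraic reduction of the additional order-five conditions, entered via $d_j=b_j(1-c_j)$ from \eqref{eq:simp_assumpt:D_zeta} and closed out with \eqref{eq:simp_assumpt:B_p}, \eqref{eq:simp_assumpt:C_eta}, \eqref{eq:simp_assumpt:D_zeta} up to $p=\eta=\zeta=2$ together with the order conditions of the uncontrolled scheme, and your two worked examples reproduce the tabulated values $7/120$ and $1/20$. The only differences are cosmetic --- there are $19$ additional conditions rather than $21$, and where the paper stops the reduction at tabulated state conditions involving $d_i$ and $a_{ij}$ you push on to pure quadrature moments $\sum_i b_ic_i^k$ via your contraction identity $\sum_l b_la_{lk}(1-c_l)=\tfrac{b_k}2(1-c_k)^2$ --- which changes nothing structurally.
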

\begin{proof}
	The full proof is given in the Appendix \ref{appendix:proof:additional:order:5} 
	and done by algebraic manipulation of the additional order condition with the simplifying assumptions and the
	usual order conditions.
\end{proof}
\begin{corollary}
	The three stage Radau IA and Radau IIA implicit Runge-Kutta schemes applied to an optimal control problem are of order five.
\end{corollary}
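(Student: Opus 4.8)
The plan is to reduce the statement entirely to Theorem~\ref{theorem:order5:conditions}, so that the only work is to verify its hypotheses for the two schemes in question. First I would record that both the three-stage Radau~IA and Radau~IIA schemes are classical fifth-order collocation-type methods for ordinary differential equations; this is standard and can be read off from \cite[Section IV.5]{HairerWanner1991}. In particular they satisfy all the usual order conditions \eqref{eq:O-1}--\eqref{eq:O-5-3} up to order five, which is what Theorem~\ref{theorem:order5:conditions} presupposes of the underlying scheme.

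The core step is then to check the three simplifying assumptions \eqref{eq:simp_assumpt:B_p}, \eqref{eq:simp_assumpt:C_eta}, \eqref{eq:simp_assumpt:D_zeta} with the thresholds $p=2$, $\eta=2$, $\zeta=2$. Rather than computing the coefficient sums by hand, I would invoke the well-known characterization of the simplifying assumptions for these schemes collected in \cite[Table IV.5.13]{HairerWanner1991}. For an $s$-stage scheme this gives $B(2s-1)$ together with $C(s-1)$ and $D(s)$ for Radau~IA, and $B(2s-1)$ together with $C(s)$ and $D(s-1)$ for Radau~IIA. Specializing to $s=3$ yields $B(5)$, $C(2)$, $D(3)$ for Radau~IA and $B(5)$, $C(3)$, $D(2)$ for Radau~IIA.

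Comparing these with the required thresholds, both schemes satisfy $B(5)$, hence $B(2)$; for Radau~IA the assumption $C(2)$ is met exactly (it is the largest available $\eta$) and $D(3)$ gives $D(2)$, while for Radau~IIA one has $C(3)$, hence $C(2)$, and $D(2)$ is met exactly. Thus in both cases the hypotheses $p=2$, $\eta=2$, $\zeta=2$ of Theorem~\ref{theorem:order5:conditions} hold, which yields the additional order conditions of order five. Since $D(2)$ in particular implies \eqref{eq:simp_assumpt:D_zeta} for $\zeta=1$, the additional conditions of orders three and four follow from Theorem~\ref{theorem:additional_cond:RK:3:4}. Together with the fifth-order conditions for the state equation, all order conditions required for the optimal control discretization up to order five are then satisfied, so the discretization is of order five.

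I do not expect a genuine obstacle here, as everything reduces to tabulated properties of the collocation schemes. The only point requiring a little care is that for Radau~IA the $C$-condition and for Radau~IIA the $D$-condition hold exactly at the threshold rather than beyond it, so one should confirm that attaining the bound $\eta=2$ respectively $\zeta=2$ suffices for Theorem~\ref{theorem:order5:conditions} --- which it does.
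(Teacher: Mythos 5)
Your proposal is correct and follows essentially the same route as the paper: verify, via \cite[Table IV.5.13]{HairerWanner1991}, that the three-stage Radau~IA and Radau~IIA schemes satisfy \eqref{eq:simp_assumpt:B_p}, \eqref{eq:simp_assumpt:C_eta}, \eqref{eq:simp_assumpt:D_zeta} at least up to $p=2$, $\eta=2$, $\zeta=2$, and then apply Theorem~\ref{theorem:order5:conditions}. The extra detail you supply (the $B(2s-1)$, $C(s-1)$, $D(s)$ resp.\ $B(2s-1)$, $C(s)$, $D(s-1)$ characterizations and the boundary cases $C(2)$ for Radau~IA and $D(2)$ for Radau~IIA) is a correct and slightly more explicit account of what the paper compresses into a single citation.
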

\begin{proof}
	As seen in \cite[Table IV.5.13]{HairerWanner1991} the schemes fulfill at least the simplifying assumptions
	\eqref{eq:simp_assumpt:B_p},
	\eqref{eq:simp_assumpt:C_eta},
	\eqref{eq:simp_assumpt:D_zeta}
	up to 
	$p=2$,
	$\eta=2$, 
	$\zeta=2$. 
\end{proof}
\begin{theorem}
	If a Runge-Kutta scheme of order six fulfills the simplifying assumptions
	\eqref{eq:simp_assumpt:B_p},
	\eqref{eq:simp_assumpt:C_eta},
	\eqref{eq:simp_assumpt:D_zeta}
	up to 
	$p=4$, 
	$\eta=2$,
	$\zeta=2$, 
	then the additional order conditions are also fulfilled.
	\label{theorem:order6:conditions}
\end{theorem}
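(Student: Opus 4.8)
The plan is to proceed exactly as in the proof of Theorem~\ref{theorem:order5:conditions} (carried out in the appendix): each of the additional order conditions \eqref{eq:A-O-6-1}--\eqref{eq:A-O-6-27} is reduced, by purely algebraic manipulation, to the ordinary order conditions \eqref{eq:O-6-1}--\eqref{eq:O-6-7} together with the lower-order ones, all of which hold because the scheme has order six for ordinary differential equations. The three workhorses are the simplifying assumptions in the stated range: from \eqref{eq:simp_assumpt:D_zeta} with $\zeta=1$ we have $d_j=b_j(1-c_j)$, and with $\zeta=2$ we have $\sum_i b_i c_i a_{ij}=\tfrac{b_j}{2}(1-c_j^2)$; from \eqref{eq:simp_assumpt:C_eta} with $\eta=2$ we have $\sum_j a_{ij}c_j=\tfrac{c_i^2}{2}$; and from \eqref{eq:simp_assumpt:B_p}, strengthened to $q\le 6$ by the order-six hypothesis (every bushy-tree condition $\sum_i b_i c_i^{q-1}=1/q$ with $q\le 6$ is itself a standard order condition, cf.~\eqref{eq:O-6-6}), we may evaluate any quadrature of a polynomial in $c$ of degree at most five.

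The mechanics are as follows. First I would substitute $d_j=b_j(1-c_j)$ throughout, which cancels every reciprocal weight $1/b_i,\,1/b_i^2,\,1/b_i^3,\,1/b_i^4$ occurring in \eqref{eq:A-O-6-1}--\eqref{eq:A-O-6-27} and turns each additional condition into an ordinary polynomial sum in the coefficients $a_{ij}$, $b_i$, $c_i$. Next I would collapse the surviving inner contractions: an inner sum of the form $\sum_j a_{ij}c_j$ is replaced by $c_i^2/2$ using \eqref{eq:simp_assumpt:C_eta}, while an inner sum of the form $\sum_i b_i c_i a_{ij}$ is replaced by $\tfrac{b_j}{2}(1-c_j^2)$ using \eqref{eq:simp_assumpt:D_zeta} for $\zeta=2$. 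What remains is matched, term by term, against a standard condition from Tables~\ref{tabel:timediscr:orderconditions-o5} and~\ref{tabel:timediscr:orderconditions-o6}, or against a linear combination of them whose right-hand side collapses through $\sum_i b_i c_i^{q-1}=1/q$. As a sanity check on the pattern, the first entry of \eqref{eq:A-O-6-1} reduces immediately: $\sum_n d_n^5/b_n^4=\sum_n b_n(1-c_n)^5=\sum_{k=0}^{5}\binom{5}{k}(-1)^k\tfrac{1}{k+1}=\tfrac16$, which is the prescribed value; the remaining entries of each row reduce by the same combination of steps.

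I expect the main obstacle to be organizational rather than conceptual: there are twenty-seven rows, i.e.\ roughly seventy individual identities, and the heaviest among them---the triple-product conditions \eqref{eq:A-O-6-21}--\eqref{eq:A-O-6-27} of Table~\ref{tabel:timediscr:orderconditions-o6-3}, carrying three nested factors $a_{\cdot\cdot}$ and reciprocal weights up to $1/b^2$---do not collapse to a single standard condition after one pass. For these one must apply \eqref{eq:simp_assumpt:C_eta} and \eqref{eq:simp_assumpt:D_zeta} repeatedly and in a prescribed order, relabelling summation indices between passes, and then cancel several standard-condition contributions against one another via the quadrature identities. The delicate points are, first, choosing for each term which index the assumption C$(2)$ or D$(2)$ is applied to so that the reciprocal weights cancel cleanly, and second, verifying that the leftover linear combinations indeed reduce to the claimed rational constants. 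Both tasks are routine but error-prone, which is why---as in the order-five case---the complete term-by-term verification is best relegated to an appendix.
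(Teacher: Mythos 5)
Your proposal takes exactly the approach of the paper: the author's proof of Theorem~\ref{theorem:order6:conditions} is precisely the term-by-term algebraic reduction of the additional order-six conditions to the ordinary order conditions via the simplifying assumptions, carried out by hand in the same manner as the appendix proof of Theorem~\ref{theorem:order5:conditions}, with the details omitted because of the sheer number of conditions. Your mechanics (substituting $d_j=b_j(1-c_j)$ to cancel the reciprocal weights, collapsing inner sums with C$(2)$ and D$(2)$, and evaluating the residual quadratures, as in your correct sample computation $\sum_n d_n^5/b_n^4=\sum_n b_n(1-c_n)^5=\tfrac16$) match the paper's stated method, so the proposal is sound.
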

\begin{proof}
	The full proof was carried out by hand by the author by algebraic manipulation of the additional order condition 
	with the simplifying assumptions and the usual order conditions. 
	As this tedious proof gives no higher insights and  is, due to the huge number of order conditions, 
	longer as the proof of Theorem \ref{theorem:order5:conditions}
	the details are omitted.
\end{proof}
\begin{corollary}
	The three stage Gauss and the four stage Lobatto IIIA, Lobatto IIIB and Lobatto IIIC implicit Runge-Kutta schemes applied to an optimal control problem are of order six.
\end{corollary}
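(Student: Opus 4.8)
The plan is to derive the corollary directly from Theorem~\ref{theorem:order6:conditions}, in exact parallel with the way the analogous order-five corollary was deduced from Theorem~\ref{theorem:order5:conditions}. By that theorem it suffices to establish two facts for each of the four schemes: first, that the scheme has classical order six for the state equation alone; and second, that it satisfies the simplifying assumptions \eqref{eq:simp_assumpt:B_p}, \eqref{eq:simp_assumpt:C_eta}, \eqref{eq:simp_assumpt:D_zeta} at least up to $p=4$, $\eta=2$, $\zeta=2$. The classical order-six property of the three stage Gauss and the four stage Lobatto~IIIA, Lobatto~IIIB and Lobatto~IIIC collocation methods is standard and recorded in \cite{HairerWanner1991}, so nothing new is needed there.

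The substantive step is to read off the available indices of the simplifying assumptions, for which I would invoke \cite[Table IV.5.13]{HairerWanner1991}. For the three stage Gauss scheme one has \eqref{eq:simp_assumpt:B_p} with $p=6$, \eqref{eq:simp_assumpt:C_eta} with $\eta=3$ and \eqref{eq:simp_assumpt:D_zeta} with $\zeta=3$; for the four stage Lobatto~IIIA scheme $p=6$, $\eta=4$, $\zeta=2$; for the four stage Lobatto~IIIB scheme $p=6$, $\eta=2$, $\zeta=4$; and for the four stage Lobatto~IIIC scheme $p=6$, $\eta=3$, $\zeta=3$. In every case the thresholds $p\ge 4$, $\eta\ge 2$, $\zeta\ge 2$ demanded by Theorem~\ref{theorem:order6:conditions} are met, so the hypotheses of that theorem are satisfied and the additional order conditions of Tables~\ref{tabel:timediscr:orderconditions-o6-1}--\ref{tabel:timediscr:orderconditions-o6-3} hold automatically. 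Combined with classical order six this yields order six for the optimal control discretization.

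The only delicate point I anticipate is that for Lobatto~IIIB the condition \eqref{eq:simp_assumpt:C_eta} is available only up to $\eta=2$, and for Lobatto~IIIA the condition \eqref{eq:simp_assumpt:D_zeta} only up to $\zeta=2$; these sit exactly at the thresholds of the theorem, leaving no slack for these two families. Hence the whole argument hinges on Theorem~\ref{theorem:order6:conditions} having been established under the sharp hypotheses $\eta=2$, $\zeta=2$ rather than under stronger assumptions. Since that theorem is assumed available, the corollary needs no further computation beyond the bookkeeping of the tabulated indices, in complete analogy with the order-three, order-four and order-five cases treated above.
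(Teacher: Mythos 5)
Your proposal is correct and follows exactly the paper's own argument: invoke Theorem~\ref{theorem:order6:conditions} and verify the simplifying assumptions via \cite[Table IV.5.13]{HairerWanner1991}, which is all the paper's proof does. Your explicit listing of the indices (and the observation that Lobatto~IIIA and Lobatto~IIIB sit exactly at the thresholds $\zeta=2$ and $\eta=2$, so the theorem's hypotheses are sharp for these families) is a useful elaboration of the same bookkeeping, not a different route.
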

\begin{proof}
	As seen in \cite[Table IV.5.13]{HairerWanner1991} the schemes  fulfill at least the simplifying assumptions
	\eqref{eq:simp_assumpt:B_p},
	\eqref{eq:simp_assumpt:C_eta},
	\eqref{eq:simp_assumpt:D_zeta}
	up to $p=4$, $\eta=2$, $\zeta=2$.
\end{proof}
With Theorem \ref{theorem:additional_cond:RK:3:4}, Theorem \ref{theorem:order5:conditions} and Theorem \ref{theorem:order6:conditions} 
we have sufficient conditions if the additional order conditions are fulfilled which are easy to check. 
It is open whether these conditions are also necessary or 
if there exists an implicit Runge-Kutta scheme which fulfills the additional order conditions but not the simplifying assumptions.

\begin{remark}[Full discretization]
	In this section the focus was on the time discretization error.
	The full discretization of a parabolic optimal control problem can be handled with the method of lines as in 
	\cite{ApelFlaig2012}. 
	Then the error can be split 	into
	\begin{align*}
		\norm{\bar y(\cdot,t_i) -\bar y_{hi}}_{L^2(\Omega)}
		+
		\norm{\bar p(\cdot,t_i) - \bar p_{hi}}_{L^2(\Omega)}
		&\lesssim
		\norm{\bar y(\cdot,t_i) - y_{h} (t_i)}_{L^2(\Omega)}
		+
		\norm{y_h(t_i) - \bar y_{hi}}_{L^2(\Omega)}
		\\
		&+
		\norm{\bar p(\cdot,t_i) - p_{h} (t_i)}_{L^2(\Omega)}
		+
		\norm{p_h(t_i) - \bar p_{hi}}_{L^2(\Omega)}
		,
	\end{align*}
	where the functions $y_h$ and $p_h$ are discretized in space with a finite element method.
\end{remark}
\begin{remark}[Regularity]
	The order conditions in this section were taken from \cite{BonnansLaurent-Varin2004,BonnansLaurent-Varin2006,Hager2000}
	and derived with techniques based on Taylor series.
	Therefore high regularity assumptions and smooth solutions are needed to observe these rates.
	For a reduction of the required regularity one might use 
	generalized Taylor polynomials as in the work by Dupont and Scott \cite{DupontScott1980}, 
	this is work of further research.
\end{remark}

%
%
\section{Numerical examples}
\label{sec:Num:Ex}
After the classification of the Runge-Kutta schemes we consider in this section a numerical example which confirms the predicted convergence rates.

As in \cite{ApelFlaig2012,Flaig2013} we solve the discretization \eqref{eq:timediscr:rk} as a system of linear equation for the vector of  unknowns 
\begin{align*}
	\left(\bar y_{h1},\dots,\bar  y_{hN},\bar p_{h0},\dots,\bar p_{hN},\bar y_{h0;1},\dots,\bar y_{hN;s},\bar p_{h0;1},\dots,\bar p_{hN;s} \right)^T.
\end{align*}

For the numerical examples we consider the optimal control problem
\begin{align}
	\left.
	\begin{aligned}
	\min 
	\frac12 \norm{ y(\cdot,T) -y_D}_{L^2(\Omega)}^2
	&+
	\frac\nu2 \int_0^T \norm{u}_{L^2(\Omega)}^2 \dd t
	,
	\\
	y_t- \Laplace y &= u
	,
	&\text{in }\Omega&\times (0,T]
	,
	\\
	\frac{\partial y}{\partial n} &=0
	,
	&\text{on }\partial\Omega&\times (0,T]
	,
	\\
	y(\cdot,0)&=v
	,
	&\text{in }\Omega
	,
	\end{aligned}
	\right\}
	\label{eq:numex:example:terminal}
\end{align}
with $\Omega=(0,1)$, $T=1$ and $y_D=v = \sqrt{2} \cos(\pi x)$.
\begin{remark}(See also \cite{Flaig2013}.)
	The analytic solution of the optimal control problem \eqref{eq:intro:parabol:ocp} 
	with $B=M=I$ and a self-adjoint elliptic operator $A$ 
	can be given as eigenfunction series~(see~\cite{HouImanuvilovKwon2006}).
	Let $\{e_i\}_{i=0}^\infty$ and $\{\lambda_i\}_{i=0}^\infty$ be the series of eigenfunctions and eigenvalues of the spatial operator~$A$. 
	If the data are given as eigenfunction expansions
        \begin{align}
                v &= \sum_{k=0}^\infty y_{0,k} e_i,
                &
                y_D &= \sum_{k=0}^\infty y_{D,k} e_i.
                \label{eq:discr:cnsv:egfct:data}
        \end{align}
        The optimal control problem decouples into independent problems for every eigenfunction $e_i$ and has the solution
        \begin{align}
                \bar y&= \sum_{i=0}^\infty C_{1,i} e_i \e^{ \lambda_i t} +C_{2,i} e_i \e^{- \lambda_i t}, 
                &
                \bar p&= \sum_{i=0}^\infty C_{3,i} e_i \e^{ \lambda_i t} .\label{eq:discr:cnsv:egfct:sol}
        \end{align}
	The coefficients can be computed with Maple and are given in Table \ref{tab:data_ana}. 

	For the example \eqref{eq:numex:example:terminal} with $y_D=v = \sqrt{2} \cos(\pi x)$  the series for the state and the adjoint state  reduce to the terms with the second eigenfunction $e_1 =   \sqrt{2} \cos(\pi x)$ of the Laplace operator with Neumann boundary conditions,
	i.e.~only the coefficients $C_{1,1}$, $C_{2,1}$ and $C_{3,1}$ do not vanish.
        \begin{table}
                \caption{Coefficients for the exact solution (\ref{eq:discr:cnsv:egfct:sol}) of the problem \eqref{eq:numex:example:terminal} to the data (\ref{eq:discr:cnsv:egfct:data}).
                \label{tab:data_ana}
                }
                \begin{center}
                        \begin{tabular}{ccccc}
                        \toprule
                        $y_{0,i}$       &       $y_{D,i}$       &       $C_{1,i}$       &       $C_{2,i}$       &       $C_{3,i}$
                        \tabularnewline
                        \midrule        
                        \addlinespace
                        $a_i$   
                        &       $b_i$
                        &       $\frac{-b_i+a_i \e^{-\lambda_i}}{-2 \nu \lambda_i \e^{\lambda_i} -\e^{\lambda_i}+\e^{-\lambda_i}}$
                        &       $-\frac{-b_i+a_i\e^{\lambda_i} +2\nu \lambda_i a_i  \e^{\lambda_i} }{-2\nu\lambda_i \e^{\lambda_i}   - \e^{\lambda_i} +\e^{-\lambda_i}} $       
                        &       $2\lambda_i\nu C_{1,i}$ 
                        \tabularnewline
                        \bottomrule
                        \end{tabular}
                \end{center}
        \end{table}
	\label{remark:represofsol}
\end{remark}
The spatial discretization is adapted to the time discretization. 
The polynomial degree of the Lagrange finite elements for the spatial discretization is chosen as $k-1$ for time discretization schemes of order $k$. 
So an error splitting argument provides the error bound
\begin{align*}
	\norm{\bar y(\cdot,t_i) -\bar y_{hi}}_{L^2(\Omega)}
	+
	\norm{\bar p(\cdot,t_i) -\bar p_{hi}}_{L^2(\Omega)}
	\lesssim
	h^k + \tau^k.
\end{align*}
In the numerical examples the discretization parameters $\tau$ and $h$ are chosen so that $\tau\sim h$.

We measure the time discretization error by the quantities 
\begin{align}
	\max_{i\in \{0,1,\cdots ,N\}}
	\left(
		(\bar y_{hi} - I_h \bar y(x,t_i))^T
		M 
		(\bar y_{hi} - I_h \bar y(x,t_i))
	\right)^{\frac12},
	\label{eq:error:state}
	\\
	\max_{i\in \{0,1,\cdots ,N\}} 
	\left(
		(\bar p_{hi} - I_h \bar p(x,t_i))^T
		M 
		(\bar p_{hi} - I_h \bar p(x,t_i))
	\right)^{\frac12},
	\label{eq:error:adjointstate}
\end{align}
where $I_h $ is the Lagrangian interpolation operator to the corresponding spatial discretization  and $M$ the finite element mass matrix.
In Figure \ref{fig:ocp:terminal:i} to Figure \ref{fig:ocp:terminal:schemes:order:6} 
\begin{figure}
	\begin{subfigure}{0.5\textwidth}
		\centering
		\input{scheme0-tex}
		\caption{Second order convergence of the discretization based on the St\"ormer-Verlet discretization of Table~\ref{table:irk:SV:2}.
		\label{fig:ocp:terminal:SV}
		}
	\end{subfigure}
	\begin{subfigure}{0.5\textwidth}
		\centering
		\input{scheme8-tex}
		\caption{Second order convergence of the discretization based on the SDIRK  scheme of Table~\ref{table:irk:SDIRK:4}.
		\label{fig:ocp:terminal:SDIRK:4}
		}
	\end{subfigure}
		\caption{Observed convergence order two of the numerical approximation of the example \eqref{eq:numex:example:terminal}.
	\label{fig:ocp:terminal:i}
	}
\end{figure}
\begin{figure}
	\begin{subfigure}{0.5\textwidth}
	\centering
		\input{scheme2-tex}
		\caption{Third order convergence of the discretization based on the Radau IA scheme of Table~\ref{table:irk:RadauIA:3}.}
	\end{subfigure}
	\begin{subfigure}{0.5\textwidth}
	\centering
		\input{scheme3-tex}
		\caption{Third order convergence of the discretization based on the Radau IIA scheme of Table~\ref{table:irk:RadauIIA:3}.}
	\end{subfigure}
	\caption{Observed convergence order three of the numerical approximation of the example~\eqref{eq:numex:example:terminal}.
	\label{fig:ocp:terminal:i-i}
	}
\end{figure}
\begin{figure}
	\begin{subfigure}{0.5\textwidth}
	\centering
		\input{scheme1-tex}
		\caption{Fourth order convergence of the discretization based on the  Gauss scheme of Table~\ref{table:irk:Gauss:4}.}
	\end{subfigure}
	\begin{subfigure}{0.5\textwidth}
	\centering
		\input{scheme4-tex}
		\caption{Fourth order convergence of the discretization based on the Lobatto IIIA scheme of Table~\ref{table:irk:LobattoIIIA:4}.}
	\end{subfigure}

	\begin{subfigure}{0.5\textwidth}
	\centering
		\input{scheme5-tex}
		\caption{Fourth order convergence of the discretization based on the Lobatto IIIB scheme of Table~\ref{table:irk:LobattoIIIB:4}.}
	\end{subfigure}
	\begin{subfigure}{0.5\textwidth}
	\centering
		\input{scheme6-tex}
		\caption{Forth order convergence of the discretization based on the Lobatto IIIC scheme  of Table~\ref{table:irk:LobattoIIIC:4}.}
	\end{subfigure}
	\caption{Observed convergence order four of the numerical approximation of the example~\eqref{eq:numex:example:terminal}.
	\label{fig:ocp:terminal:ii}
	}
\end{figure}
we observe nicely the predicted convergence rates for the example \eqref{eq:numex:example:terminal} with $\nu=0.001$. 
In the computations with some fourth and sixth order schemes we also observe the influence of the round-off error due to the high numbers of unknowns. All the computations were done in Matlab. 

The predicted order reduction for the SDIRK method can be seen in Figure \ref{fig:ocp:terminal:SDIRK:4}. 
For spatial discretization of the numerical example with the SDIRK time discretization cubic Lagrange finite elements are used, 
as for the other fourth order time discretization schemes.
\begin{figure}
	\begin{subfigure}{0.5\textwidth}
	\centering
		\input{scheme12-tex}
		\caption{Fifth order convergence of the discretization based on the three stage Radau IA scheme.}
	\end{subfigure}
	\begin{subfigure}{0.5\textwidth}
	\centering
		\input{scheme13-tex}
		\caption{Fifth order convergence of the discretization based on the three stage Radau IIA scheme.}
	\end{subfigure}
	\caption{Observed convergence order five of the numerical approximation of the example \eqref{eq:numex:example:terminal}.
	\label{fig:ocp:terminal:schemes:order:5:6}
	}
\end{figure}
\begin{figure}
	\begin{subfigure}{0.5\textwidth}
	\centering
		\input{scheme11-tex}
		\caption{Sixth order convergence of the discretization based on the three stage Gauss scheme.}
	\end{subfigure}
	\begin{subfigure}{0.5\textwidth}
	\centering
		\input{scheme14-tex}
		\caption{Sixth order convergence of the discretization based on the four stage Lobatto IIIA scheme.}
	\end{subfigure}
	\begin{subfigure}{0.5\textwidth}
	\centering
		\input{scheme15-tex}
		\caption{Sixth order convergence of the discretization based on the four stage Lobatto IIIB scheme.}
	\end{subfigure}
	\begin{subfigure}{0.5\textwidth}
	\centering
		\input{scheme16-tex}
		\caption{Sixth order convergence of the discretization based on the four stage Lobatto IIIC scheme.}
	\end{subfigure}
	\caption{Observed convergence order six of the numerical approximation of the example \eqref{eq:numex:example:terminal}.
	\label{fig:ocp:terminal:schemes:order:6}
	}
\end{figure}
\begin{remark}
	The order reduction of the SDIRK method can also be observed for an optimal control problem 
	with one linear ordinary differential equation.
	Consider the optimal control problem
	\begin{align}
	\left.
	\begin{aligned}
	\min 
	\frac12 (y(1) -1)^2
	&+
	\frac\nu2 \int_0^1 u^2 \dd t
	,
	\\
	y_t +\pi^2 y &= u
	,
	&\text{for } t &\in (0,1]
	,
	\\
	y(0)&=1
	.
	\end{aligned}
	\right\}
	\label{eq:oc:ode}
	\end{align}
	Even for this very simple example we observe the reduced convergence rate in Figure \ref{fig:SDIRK:ode}.
	Again the regularization parameter $\nu=0.001$ was chosen.
	\begin{figure}
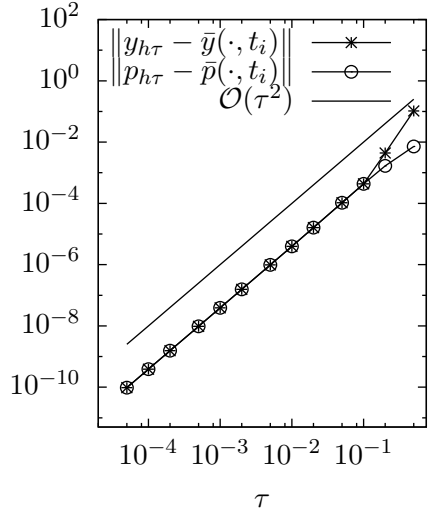

		\centering
		\include{scheme8-spectral-tex}
		\caption{Observed convergence for example \eqref{eq:oc:ode} with the SDIRK method.
		\label{fig:SDIRK:ode}
		}
	\end{figure}
\end{remark}
	
\begin{remark}
	The optimal control problem \eqref{eq:oc:ode} can be interpreted as a spatial Galerkin discretization of optimal control Problem \eqref{eq:numex:example:terminal}, where the bases of trial and test space  are chosen as the second normalized eigenfunction of the Laplace operator. Note that the first eigenfunction of the Laplace operator with Neumann boundary conditions is the constant function. 
\end{remark}
\begin{remark}
	In Figure \ref{fig:ocp:terminal:SV} we observe the second order convergence of the St\"ormer-Verlet scheme.
	Similar observations were presented in \cite{Flaig2013}.
	But in contrast to \cite{Flaig2013}, where the convergence of the state was observed
	in the time discretization points $t_i$ and the convergence of the adjoint state was observed 
	in the time middle points $t_{i+\frac12}=\frac{t_i+t_{i+1}}2$, 
	we present in Figure \ref{fig:ocp:terminal:SV} the convergence 
	of the state and the adjoint state in the time discretization points $t_i$.
\end{remark}

%
%
\section{Conclusions and Outlook}
In this paper we discussed the use of higher order implicit Runge-Kutta schemes for optimal control with parabolic partial differential equations for which optimization and discretization commute.  
In terms of the well known simplifying assumptions on the coefficients of  implicit Runge-Kutta scheme we were able to give
a classification for which discretization schemes up to order six the convergence order is preserved. 
For collocation schemes of Gauss, Radau IA, Radau IIA, Lobatto IIIA, Lobatto IIIB and Lobatto IIIC type and a SDIRK scheme the expected and the numerical convergence rates coincide nicely.

For schemes of order higher than six the order conditions are not known explicitly, but they can be computed with the aid of bi-colored Butcher trees, as described in \cite{BonnansLaurent-Varin2004,BonnansLaurent-Varin2006}. 
For a reduction of the additional order conditions of order higher as six the procedure presented in this paper is not practical due the huge number of additional conditions. Therefore a more elegant technique should be developed for the classification of schemes of order higher than six.

%
%
\section*{Acknowledgements}

The work was partially supported by the DFG priority program SPP 1253.

\appendix
\section{Proof of Theorem \ref{theorem:order5:conditions}}
\label{appendix:proof:additional:order:5}
\begin{proof}[Full proof of Theorem \ref{theorem:order5:conditions}]
	The idea of the proof is to use the simplifying assumptions 
	\eqref{eq:simp_assumpt:B_p},
	\eqref{eq:simp_assumpt:C_eta}, 
	\eqref{eq:simp_assumpt:D_zeta}
	to reduce the additional order conditions to the classic order conditions or 
	order conditions of lower order, which have already been reduced to the order conditions of the uncontrolled system.
	As all the numerical schemes fulfill the order conditions for the uncontrolled systems, 
	these conditions can be used to calculate the value of the reduced expression.

	Surely the way of the application of the simplifying assumptions is not unique, here one possibility is presented.
	A first goal in the reduction of order conditions with a fraction $\frac\cdot{b_i}$
	is to use \eqref{eq:simp_assumpt:D_zeta} to produce an additional $b_i$ which cancels out.
	In the following we discuss the reduction of all the additional order conditions.
	\begin{enumerate}
		\item 
			For the first additional order condition of \eqref{eq:A-O-5-1} we use the simplifying assumption \eqref{eq:simp_assumpt:D_zeta} for $\zeta=1$, the last condition of \eqref{eq:O-4} and the first condition of  \eqref{eq:O-5-3}. 
			This yields
			\begin{align*}
				\sum_{kl} \frac1{b_k}a_{lk}c_kd_kd_l
				&=
				\sum_{kl} d_la_{lk}c_k - \sum_{kl}a_{lk}c_k^2d_l
				=\frac1{24}-\frac1{60}=\frac1{40}.
			\end{align*}
		\item 
			For the second additional order condition of \eqref{eq:A-O-5-1} we use
			the simplifying assumption \eqref{eq:simp_assumpt:D_zeta} for $\zeta=1$ and the third condition of
			\eqref{eq:O-4} and the second condition of \eqref{eq:O-5-3} to get
			\begin{align*}
				\sum_k \frac1{b_k} c_k^2 d_k^2
				&=
				\sum_k c_k^2 d_k
				-
				\sum_k c_k^3 d_k
				=\frac1{12}-\frac1{20}=\frac1{30}
				.
			\end{align*}
		\item 
			For the last order condition of \eqref{eq:A-O-5-1} we use again
			the simplifying assumption \eqref{eq:simp_assumpt:D_zeta} for $\zeta=1$, the first condition of \eqref{eq:O-3} the third condition of \eqref{eq:O-4} and the first condition of \eqref{eq:O-5-2}. This gives
			\begin{align*}
				\sum_l \frac1{b_l^2}c_ld_l^3
				&=
				\sum c_l d_l 
				-2 \sum c_l^2d_l
				+\sum_lc_l^3d_l = \frac16 - \frac2{12}-\frac1{20}=\frac1{20}
				.
			\end{align*}
		\item 
			For the first condition of \eqref{eq:A-O-5-2} we apply
			the simplifying assumption \eqref{eq:simp_assumpt:D_zeta} for $\zeta=1$ 
			and use the last condition of \eqref{eq:O-4} and the second condition of \eqref{eq:O-5-1}, which gives
			\begin{align*}
				\sum_{kl} \frac{1}{b_k}a_{kl}c_l d_k^2
				&=
				\sum_{kl} a_{kl}c_l d_k 
				-
				\sum_{kl} a_{kl}c_l d_k c_k
				=
				\frac1{24}-\frac1{40}=\frac1{60}
				.
			\end{align*}
		\item 
			For the second condition of \eqref{eq:A-O-5-2} we use
			the simplifying assumption \eqref{eq:simp_assumpt:D_zeta} for $\zeta=1$, 
			the condition \eqref{eq:O-2},
			the first condition of \eqref{eq:O-3},
			the third condition of \eqref{eq:O-4}
			and
			the first condition of \eqref{eq:O-5-2}
			to end with
			\begin{align*}
				\sum_m \frac1{b_m^3}d_m^4
				&=
				\sum_m 
				d_m 
				-
				\sum_m 
				3d_mc_m
				+
				\sum_m 
				3 d_m c_m^2
				-
				\sum_m 
				d_m c_m^3
				=
				\frac15
				.
			\end{align*}
		\item 
			For the third condition of \eqref{eq:A-O-5-2} we apply
			the simplifying assumption \eqref{eq:simp_assumpt:C_eta} for $\eta=2$ twice 
			and get with the second condition of \eqref{eq:O-5-3} the result
			\begin{align*}
				\sum_{ijk}b_ia_{ik}a_{ij}c_jc_k
				&=
				\sum_i b_i \left(\sum_k a_{ik}c_k\right) \left(\sum_j a_{ij}c_j\right)
				=
				\frac14 \sum_i b_i c_i^4 = \frac1{20}
				.
			\end{align*}
		\item 
			For the first condition of \eqref{eq:A-O-5-3} we apply again 
			the simplifying assumption \eqref{eq:simp_assumpt:C_eta} for $\eta=2$ 
			and the use of the first condition of \eqref{eq:O-5-3} yields
			\begin{align*}
				\sum_{jkl} a_{lk}a_{kj}c_j d_l
				&=
				\sum_{kl} a_{lk}d_l \left(\sum_j a_{kj} c_j\right)
				=
				\frac12
				\sum_{kl} a_{lk}d_l c_k^2
				=
				\frac1{120}
				.
			\end{align*}
		\item 
			For the second condition of \eqref{eq:A-O-5-3} we apply first
			the simplifying assumptions  \eqref{eq:simp_assumpt:D_zeta} for $\eta=1$ 
			and 
			then the definition of $c_l$ and 
			the simplifying assumption \eqref{eq:simp_assumpt:C_eta} for $\eta=2$.
			Together with the third condition of \eqref{eq:O-4} and the first condition of \eqref{eq:O-5-2}
			this gives
			\begin{align*}
				\sum_{kl}\frac1{b_k} a_{lk}d_kc_ld_l
				&=
				\sum_l c_l d_l \left(\sum_k  a_{lk}\right)
				-
				\sum_l c_l d_l \left(\sum_k  a_{lk} c_k\right)
				=
				\sum_l c_l^2d_l - \frac12 \sum_l c_l^3d_l 
				=
				\frac7{120}
				.
			\end{align*}
		\item 
			For the last condition of \eqref{eq:A-O-5-3} we apply the simplifying assumption 
			\eqref{eq:simp_assumpt:D_zeta} for $\eta=2$ twice 
			and get with
			\eqref{eq:O-1}, 
			the second condition of \eqref{eq:O-2}
			and the second condition of \eqref{eq:O-5-3} the result
			\begin{align*}
				\sum_{ijk} \frac{b_ib_j}{b_k} a_{jk}a_{ik}c_ic_j
				&=
				\sum_{jk} \frac{b_j}{b_k} a_{jk} c_j \left(\sum_i b_i a_{ik}c_i\right)
				=
				\frac12 \sum_k (1-c_k) \left(\sum_jb_ja_{jk}c_j\right)
				\\
				&=
				\frac14 \sum_k b_k (1-c_k)(1-c_k^2)
				=
				\frac14
				\sum_k\left( b_k - 2 b_kc_k^2 + b_kc_k^4\right)
				=\frac2{15}
				.
			\end{align*}
		\item 
			For the first condition of \eqref{eq:A-O-5-4} we use 
			the simplifying assumption  \eqref{eq:simp_assumpt:D_zeta} for $\eta=1$ and $\eta=2$,
			the second condition of \eqref{eq:O-4},
			the second condition of \eqref{eq:O-3} 
			and the second condition of \eqref{eq:O-5-3} to get
			\begin{align*}
				\sum_{ik} \frac{b_i}{b_k} a_{ik}c_ic_kd_k
				&=
				\sum_{ik} b_i a_{ik}c_ic_k 
				-
				\sum c_k^2 \left( \sum_i b_i c_i a_{ik} \right)
				=
				\frac18
				-
				\frac12 \sum c_k^2 b_k \left(1-c_k^2 \right)
				= \frac7{120}
				.
			\end{align*}
		\item 
			For the second condition of \eqref{eq:A-O-5-4} we use again
			the simplifying assumptions  \eqref{eq:simp_assumpt:D_zeta} for $\eta=1$ and $\eta=2$. 
			The remaining expressions are treated with \eqref{eq:O-1}, 
			the simplifying condition \eqref{eq:simp_assumpt:B_p} for $p=2$,
			the second condition of \eqref{eq:O-3},
			the first condition of \eqref{eq:O-4}
			and the second condition of \eqref{eq:O-5-3}.
			This gives
			\begin{align*}
				\sum_{il} \frac{b_i}{b_l^2}a_{il}c_id_l^2
				&=
				\sum_{il} b_i a_{il} c_i (1-c_l)^2
				=
				\sum_l (1-c_l)^2 \left( \sum_i b_i c_i a_{il} \right)
				=
				\frac12 \sum_l b_l (1-c_l)^2(1-c_l^2)
				\\
				&=
				\frac12 \sum_l \left( b_l -2b_lc_l+2b_lc_l^3-b_lc_l^4 \right)
				=\frac3{20}
				.
			\end{align*}
		\item 
			For the last condition of  \eqref{eq:A-O-5-4} we use first
			the simplifying assumptions  \eqref{eq:simp_assumpt:D_zeta} for $\eta=1$ we get 
			due to symmetry properties
			\begin{align}
				\sum_{lmk} \frac1{b_k}a_{mk}a_{lk}d_l d_m
				&=
				\sum_{lmk} \frac{b_lb_m}{b_k}a_{mk}a_{lk} (1-c_l)(1-c_m)
				\nonumber
				\\
				&=
				\sum_{lmk} \frac{b_lb_m}{b_k}a_{mk}a_{lk} 
				-2 \sum_{lmk} \frac{b_lb_m}{b_k}a_{mk}a_{lk} c_l
				+ \sum_{lmk} \frac{b_lb_m}{b_k}a_{mk}a_{lk} c_lc_m.
				\label{eq:in_proof_reduction_order_5:condition_10}
			\end{align}
			The last term is the third condition of \eqref{eq:A-O-5-3} and therefore we already know how to tread this term.
			On the first term of \eqref{eq:in_proof_reduction_order_5:condition_10} we apply 
			the simplifying assumptions \eqref{eq:simp_assumpt:D_zeta} for $\eta=1$ twice  
			and get with
			\eqref{eq:O-1},
			\eqref{eq:simp_assumpt:B_p} for $p=2$  
			and the second condition of \eqref{eq:O-3}
			\begin{align*}
				\sum_{lmk} \frac{b_lb_m}{b_k}a_{mk}a_{lk} &=
				\sum_k \frac1{b_k} 
				\left(\sum_m b_m a_{mk}\right)
				\left(\sum_l b_l a_{lk}\right)
				=
				\sum_k b_k(1-c_k)^2
				=\frac13.
			\end{align*}
			For the remaining term of \eqref{eq:in_proof_reduction_order_5:condition_10} 
			the use of \eqref{eq:simp_assumpt:D_zeta} for $\eta=1$ and $\eta=2$  
			and
			\eqref{eq:O-1},
			\eqref{eq:simp_assumpt:B_p} for $p=2$,
			the second condition of \eqref{eq:O-2}
			and the first condition of \eqref{eq:O-2}
			yields
			\begin{align*}
				\sum_{lmk} \frac{b_lb_m}{b_k}a_{mk}a_{lk} c_l
				&=
				\sum_k \frac1{b_k} \left( \sum_mb_ma_{mk}\right)\left(\sum_lb_la_{lk}c_l\right)
				=
				\frac12 \sum_k b_k (1-c_k)(1-c_k^2) = \frac5{24}.
			\end{align*}
			Altogether we have
			\begin{align*}
				\sum_{lmk} \frac1{b_k}a_{mk}a_{lk}d_l d_m
				&=
				\frac13-\frac5{12}+\frac2{15}=\frac1{20}.
			\end{align*}
		\item 
			For the first condition of \eqref{eq:A-O-5-5} 
			we start with the use of the simplifying assumption \eqref{eq:simp_assumpt:D_zeta} for $\eta=1$
			and the definition of $c_m$.
			The last condition of \eqref{eq:O-4}, 
			the first condition of \eqref{eq:O-5-3} 
			and the first condition of \eqref{eq:O-3} give
			\begin{align*}
				\sum_{lm} \frac1{b_l^2}a_{ml}d_l^2d_m
				&=
				\sum_{lm} a_{ml} (1-c_l)^2 d_m 
				=
				\sum_{lm} a_{ml}d_m 
				-2 \sum_{lm} a_{ml}c_ld_m
				+
				\sum_{lm}a_{ml}c_l^2d_m
				\\
				&=
				\sum_{m} c_m d_m
				-\frac1{12}
				+\frac1{60}
				=\frac16-\frac1{15}=\frac1{10}
				.
			\end{align*}
		\item 
			For the second condition of \eqref{eq:A-O-5-5}
			the use of \eqref{eq:simp_assumpt:D_zeta} for $\zeta=1$,
			the definition of $c_i$,
			the last condition of \eqref{eq:O-4},
			the simplifying assumption \eqref{eq:simp_assumpt:C_eta} for $\eta=2$ 
			and the first condition of \eqref{eq:O-5-3}
			yields
			\begin{align*}
				\sum_{kml}\frac1{b_k}a_{ml}a_{lk}d_kd_m
				&= \sum_{kml}a_{ml}a_{lk} d_m - \sum_{kml}a_{ml}a_{lk} d_mc_k
				\\
				&= \sum_{ml}a_{ml}c_{l} d_m - \sum_{ml}a_{ml} d_m\left(\sum_ka_{lk}c_k \right)
				=
				\frac1{24}
				-
				\frac12 \sum_{ml} a_{ml}d_mc_l^2
				\\
				&=\frac1{24}-\frac1{120}
				=\frac1{30}
				.
			\end{align*}
		\item 
			For the last condition of \eqref{eq:A-O-5-5}
			we apply the simplifying assumption \eqref{eq:simp_assumpt:D_zeta} for $\zeta=2$,
			the definition of $c_l$,
			first condition of \eqref{eq:O-5-3}
			and the first condition of \eqref{eq:O-3}
			to get
			\begin{align*}
				\sum_{ilk} \frac{b_i}{b_k}a_{lk} a_{ik}c_id_l
				&=
				\sum_{lk} \frac1{b_k} a_{lk}d_l \left(\sum_i b_i a_{ik}c_i\right)
				=
				\frac12 \sum_{lk} a_{lk}d_l - \frac12 \sum_{lk} a_{lk}d_lc_k^2
				\\
				&=
				\frac12 \sum_{l} c_{l}d_l
				-
				\frac1{120}
				=
				\frac3{40}.
			\end{align*}
		\item 
			For the first condition of \eqref{eq:A-O-5-6}  we use 
			the simplifying assumption \eqref{eq:simp_assumpt:D_zeta} for $\zeta=1$ and
			the simplifying assumption \eqref{eq:simp_assumpt:C_eta} for $\zeta=2$ three times.
			With the definition of $c_i$, the first condition of \eqref{eq:O-4} 
			and the second condition of \eqref{eq:O-5-3}
			we get
			\begin{align*}
				\sum_{ikl} \frac{b_i}{b_k} a_{ik}a_{il}d_k c_l
				&=
				\sum_{ik} b_i a_{ik} \left(\sum_l a_{il}c_l\right)
				-
				\sum_i b_i 
				\left(\sum_k a_{ik} c_k\right)
				\left(\sum_l a_{il} c_l\right)
				\\
				&= 
				\frac12 \sum_i b_ic_i^2\left(\sum_k a_{ik}\right)
				-
				\frac14 \sum_i b_i c_i^4
				=
				\frac12\sum_i b_ic_i^3 -\frac1{20}
				=\frac18-\frac1{20}=\frac3{40}.
			\end{align*}
		\item 
			To the second condition of \eqref{eq:A-O-5-6} we apply
			the simplifying assumption \eqref{eq:simp_assumpt:D_zeta} for $\zeta=1$ once, 
			use the definition of $c_i$,
			the third condition of \eqref{eq:A-O-4} 
			and the first condition of \eqref{eq:A-O-5-6}. This yields
			\begin{align*}
				\sum_{ilm}\frac{b_i}{b_lb_m} a_{im}a_{il}d_ld_m
				&=
				\sum_{ilm} \frac{b_i}{b_m} a_{im}a_{il} d_m
				-
				\sum_{ilm} \frac{b_i}{b_m} a_{im}a_{il} c_ld_m
				\\
				&=
				\sum_{ilm} \frac{b_i}{b_m} a_{im}c_{l} d_m
				-
				\sum_{ilm} \frac{b_i}{b_m} a_{im}a_{il} c_ld_m
				=
				\frac2{15}
				.
			\end{align*}
		\item 
			The application of the simplifying assumption \eqref{eq:simp_assumpt:D_zeta} for $\zeta=1$
			to the last condition of \eqref{eq:A-O-5-6} 
			together with 
			the last condition of \eqref{eq:O-5-2}, the definition of $c_i$ and the first condition of \eqref{eq:O-4}
			gives
			\begin{align*}
				\sum_{ik} \frac{b_i}{b_k}a_{ik}c_i^2d_k
				&=
				\sum_{ik}b_ia_{ik}c_i^2
				-
				\sum_{ik}b_ia_{ik}c_i^2c_k
				=
				\sum_{ik}b_ic_i^3
				-
				\frac1{10}
				=\frac3{20}
				.
			\end{align*}
		\item 
			For the condition \eqref{eq:A-O-5-7} we use 
			the simplifying assumption \eqref{eq:simp_assumpt:D_zeta} for $\zeta=1$ two times,
			the definition of $c_l$, 
			the first condition of \eqref{eq:O-3},
			the third and fourth conditions of \eqref{eq:O-4}
			and the second condition of \eqref{eq:O-5-1} and get
			\begin{align*}
				\sum_{lm}\frac1{b_lb_m} a_{lm}d_l^2d_m &=
				\sum_{lm} a_{lm}d_l 
				-
				\sum_{lm} a_{lm}d_l c_l
				-
				\sum_{lm} a_{lm}d_l c_m
				+
				\sum_{lm} a_{lm}d_l c_lc_m
				\\
				&=
				\sum_{lm} c_{l}d_l 
				-
				\sum_{lm} c_{l}^2d_l 
				-
				\sum_{lm} a_{lm}d_l c_m
				+
				\sum_{lm} a_{lm}d_l c_lc_m
				=\frac1{15}
			\end{align*}
	\end{enumerate}
	Altogether we have derived the additional order conditions with the use of the simplifying assumptions
	and \eqref{eq:simp_assumpt:B_p} for $p\leq2$,
	\eqref{eq:simp_assumpt:C_eta} for  $\eta=1,2$,
	\eqref{eq:simp_assumpt:D_zeta} for  $\zeta=1,2$,
	and the order conditions \eqref{eq:O-1}--\eqref{eq:O-4} and \eqref{eq:O-5-1}--\eqref{eq:O-5-3}.	
\end{proof}

%
%

\bibliographystyle{plain}
\bibliography{submission}

\end{document}